\documentclass[fleqn,preprint,3p,a4paper]{elsarticle}
\usepackage{amssymb}
\usepackage{amsmath}
\usepackage{amsthm}
\usepackage{dcolumn}
\usepackage{endnotes}
\usepackage{tabularx}
\usepackage[matrix,arrow]{xy}
\usepackage{wasysym}
\usepackage{graphicx}

\newtheorem{theorem}{Theorem}[section]
\newtheorem{proposition}[theorem]{Proposition}
\newtheorem{lemma}[theorem]{Lemma}
\newtheorem{corollary}[theorem]{Corollary}

\theoremstyle{definition}

\newtheorem{remark}[theorem]{Remark}

\newcommand{\ir}{{\mathsf{Irr}}}

\newcommand{\mn}{\mathbb N}

\newcommand{\ua}{{\uparrow}}
\newcommand{\da}{{\downarrow}}

\newcommand{\N}{\mathbb{N}}
\newcommand{\B}{\mathbb{B}}
\newcommand{\cQ}{\mathcal{Q}}
\newcommand{\cS}{\mathcal{S}}
\newcommand{\RO}{\operatorname{RO}}
\newcommand{\cl}{\operatorname{cl}}
\newcommand{\Int}{\operatorname{int}}
\newcommand{\Min}{\operatorname{min}}
\newcommand{\Max}{\operatorname{max}}

\begin{document}
\begin{frontmatter}

\title{Scott spaces of complete Boolean algebras need not be co-sober\tnoteref{t1}}
\tnotetext[t1]{This research was supported by the National Natural Science Foundation of China (Nos. 12561083, 12471070, 12071199).}

\author[W. Ji]{Wei Ji}
\ead{javeey@163.com}
\address[W. Ji]{School of Mathematics and Statistics, Guilin University of Technology,
  Guilin 541004, China}

\author[X. Xu]{Xiaoquan Xu\corref{mycorrespondingauthor}}
\cortext[mycorrespondingauthor]{Corresponding author}
\ead{xiqxu2002@163.com}
\address[X. Xu]{School of Mathematics and Statistics,
Minnan Normal University, Zhangzhou 363000, China}

%\nonumnote{* This is the corresponding author.}

\begin{abstract}
In this paper, we first prove that for a complete Boolean algebra $L$, the complement graph of $L$ is a \emph{KC}-space (as a subspace) and each
non-singleton compact irreducible subspace of that graph generates a
non-principal $k$-irreducible compact saturated set in the Scott space of the
square algebra $L\times L$. We then show that every compact sequential \emph{US}-space embeds
into the complement graph of a suitable complete Boolean algebra. The
embedding is built from a finite tail-constraint poset and its regular-open
completion. Applying the construction to van Douwen's compact
Fr\'echet anti-Hausdorff \emph{US}-space gives a complete Boolean algebra $B$ whose Scott
space $\Sigma~\!\!B$ is not co-sober, thereby answering negatively a question on Scott
spaces of complete Boolean algebras. The same Scott space  $\Sigma~\!\!B$ is non-sober and, as a Scott space of a complete lattice,
is well-filtered.
\end{abstract}

\begin{keyword}
Scott topology; co-sober space; complete Boolean algebra; regular-open
completion; \emph{KC}-space; \emph{US}-space.

\MSC 06B35; 06E10; 54A05; 54D99

\end{keyword}

\end{frontmatter}

\section{Introduction}

In his pioneering work in what has come to be called ``domain theory" which provides a
mathematical foundation for the denotational semantics of programming languages, Dana Scott
introduced a crucial $T_0$-topology which came to be called the Scott topology. In domain theory
and non-Hausdorff topology, sobriety is probably the
most important and useful property of $T_0$-spaces (see \cite{Abramsky-Jung-1994, GHKLMS-2003, Goubault-2013, Xu-Zhao-2021}). The Hofmann-Mislove Theorem reveals a very distinct characterization for the sober spaces via open filters and illustrates the close relationship between domain theory and topology.

Co-sobriety was introduced by Escard\'o, Lawson and Simpson in connection
with a dual form of the Hofmann--Mislove theorem
\cite{Escardo-Lawson-Simpson-2004}. A $T_0$-space is co-sober when each
$k$-irreducible compact saturated set is the saturation of a point. Despite
the terminology, sobriety and co-sobriety are not formal duals. Co-sober spaces may not be sober (for example, the Scott space of Johnstone dcpo \cite{Johnstone-1981}. \cite[Example 2.1]{Wen-Xu-2018} is another example), and sober spaces
need not be co-sober (see, e.g., \cite{Wen-Xu-2018, Shen-Wu-Xi-Zhao-2020}, and several basic
preservation questions remain unsettled (see \cite{Xu-Zhao-2021, Zhao-He-Wang-2023}).

Two recent developments provide the immediate context for the present work.
Xi, Shen and Zhao proved that the Scott space of the complete Boolean algebra
of regular open subsets of the real line is not sober
\cite{Xi-Shen-Zhao-2026}. Xu, Xu, Ji and Qiu showed that the Smyth power space of
a co-sober space need not be co-sober \cite{Xu-Xu-Ji-Qiu-2026}. In an invited
talk at ISDT 2026, Xu asked whether the Scott space of every complete
Boolean algebra is co-sober \cite{Xu-2026}. The answer is negative.

The proof is organized around a complement graph. For a complete Boolean
algebra $B$, let

$$  G_B=\{(b,\neg b):b\in B\}\subseteq B^2.$$

\noindent We first show that $G_B$, with the topology inherited from the Scott space of
$B^2$, is a \emph{KC}-space. If $M$ is a non-singleton compact irreducible subspace
of $G_B$, then ${\uparrow} M$ is a compact saturated subset of $\Sigma~\!\!B^2$.
A Scott-closed disjointness locus recovers $M$ from ${\uparrow} M$ and implies
that ${\uparrow} M$ is $k$-irreducible. Its set of minimal elements is exactly
$M$, so it cannot be principal.

It remains to realize a suitable compact irreducible space inside a
complement graph. We prove that every compact sequential \emph{US}-space embeds
in $G_B$ for some complete Boolean algebra $B$. The algebra is the
regular-open completion of a poset whose conditions impose finitely many
eventual relations on convergent sequences together with a finite binary
coloring. A tail-density lemma converts topological convergence in the
original space into order convergence in $B$, and hence into Scott
convergence in $G_B$. Van Douwen's compact Fr\'echet
anti-Hausdorff \emph{US}-space then supplies the required compact irreducible
subspace \cite{Douwen-1993, Xu-Xu-Ji-Qiu-2026}.

The resulting Scott space is also non-sober. Thus the construction yields a
well-filtered Scott space of a complete Boolean algebra that is neither sober
nor co-sober. Section~\ref{sec:prelim} fixes the terminology.
Section~\ref{sec:graph} develops the complement-graph reduction.
Section~\ref{sec:realization} proves the Boolean realization theorem, and
Section~\ref{sec:main} gives the counterexample and its consequences.

\section{Preliminaries}\label{sec:prelim}

We use standard terminology from general topology and domain theory; see
\cite{Engelking-1989, GHKLMS-2003, Goubault-2013}.  All compactness
statements are made without a separation assumption.

For a poset $P$ and $A\subseteq P$, write
 $ {\uparrow} A=\{x\in P:(\exists a\in A)\ a\leq x\}$ and ${\downarrow} A=\{x\in P:(\exists a\in A)\ x\leq a\}$.
For $x\in P$, let ${\ua} x={\ua}\{x\}$ and ${\da} x={\da}\{x\}$.  A
nonempty set $D\subseteq P$ is directed if every finite subset of $D$ has an
upper bound in $D$.  Dually, $E\subseteq P$ is filtered if every finite
subset has a lower bound in $E$.

For a $T_0$-space $X$, we use $\leq_X$ to denote the \emph{specialization order} of $X$: $x\leq_X y$ if{}f $x\in \cl{\{y\}}$. A nonempty subset $A$ of $X$ is said to be \emph{irreducible} if for any $F_1$, $F_2\in\Gamma(X)$, $A\subseteq F_1\cup F_2$ implies $A\subseteq F_1$ or $A\subseteq F_2$. The set of all irreducible closed subsets of $X$ is denoted by $\ir_c(X)$. The space $X$ is called \emph{sober}, if for any $A\in\ir_c(X)$, there is a unique point $x\in X$ such that $A=\cl{\{x\}}$. A nonempty subspace $Y$ is irreducible if every two nonempty open subsets of $Y$ intersect. This is equivalent to the usual closed-set definition.  We also use the term \emph{anti-Hausdorff} for this property (see \cite{Hoffmann-1979-1}).

A subset $U$ of a poset $P$ is \emph{Scott open} if it is an upper set and, whenever a
directed set $D$ has a supremum with $\bigvee D\in U$, one has
$D\cap U\neq\varnothing$. The resulting topology is denoted by $\sigma(P)$,
and $\Sigma P=(P,\sigma(P))$ is the \emph{Scott space} of $P$. Its specialization
order is the original order. The \emph{dual Scott topology} on a complete
lattice $L$ means the Scott topology of the opposite order $L^{\mathrm{op}}$;
its closed sets are the upper sets closed under infima of filtered subsets.

A subset $A$ of a $T_0$-space $Y$ is called \emph{saturated} if $A$ equals the intersection of all open sets containing it (equivalently, $A$ is an upper set in the specialization order). We use $\cQ(Y)$ to denote the set of all nonempty compact saturated subsets of $Y$ and endow it with the Smyth order $\sqsubseteq: K_1\sqsubseteq K_2$ iff $K_{2}\subseteq K_{1}$. The space $Y$ is called \emph{well-filtered} if for any filtered family $\mathcal{K}\subseteq \cQ(Y)$ and any open set $U$, $\bigcap\mathcal{K}{\subseteq} U$ implies $K{\subseteq} U$ for some $K{\in}\mathcal{K}$. It is well-known that every sober space is well-filtered (see, e.g., \cite[Theorem II-1.21]{GHKLMS-2003}).

Let $X$ be a $T_0$-space. Denote by $\cQ(X)$ the family of nonempty compact
saturated subsets of $X$. An element $K\in\cQ(X)$ is \emph{$k$-irreducible}
if

$$  K=K_1\cup K_2,\quad K_1,K_2\in\cQ(X),$$

\noindent implies $K=K_1$ or $K=K_2$.  The space $X$ is \emph{co-sober} if every
$k$-irreducible member of $\cQ(X)$ has the form $\uparrow x$ for a point
$x\in X$ \cite{Escardo-Lawson-Simpson-2004}.  The representing point is unique
because $X$ is $T_0$.

As in \cite{Douwen-1993, Kunzi-Zypen-2004}, a topological space $X$ is called a \emph{KC}-\emph{space} if each compact set of $X$ is closed. The space $X$ is said to be a \emph{US}-\emph{space} provided that each convergent sequence has a unique limit. It is known that each Hausdorff space (=$T_2$-space) is a \emph{KC}-space, each \emph{KC}-space is a \emph{US}-space, each \emph{US}-space is a $T_1$-space and each first-countable \emph{US}-space is a $T_2$-space (cf. \cite{Wilansky-1967}).

A space $X$ is called a \emph{Fr\'echet space} if for every $A\subseteq X$ and $x\in \cl A$, there exists a sequence $x_1, x_2, ..., x_n, ...$ of $A$ converging to $x$. The space $X$ is called a \emph{sequential space} if a set $A\subseteq X$ is closed if and only if $\mathrm{lim}~\! a_n\subseteq A$ for any sequence $(a_n)_{n\in \mn}$ of $A$, where $\mathrm{lim}~\! a_n$ is the set of all limits of $(a_n)_{n\in \mn}$ in $X$. Every first-countable space is a Fr\'echet space and every Fr\'echet space is a sequential space (see, e.g., \cite[Theorem 1.6.14]{Engelking-1989}).

Throughout, $B$ denotes a complete Boolean algebra.  Its complement is
written $\neg b$.  Products of Boolean algebras carry the coordinatewise
order and operations.  We shall repeatedly use

$$  a\wedge\bigvee S=\bigvee_{s\in S}(a\wedge s).$$

\noindent This identity holds in every complete Boolean algebra because every complete Boolean algebra is a complete Heyting algebra, more precisely, the maps
$a\wedge(-)$ always have the right adjoints $\neg a\vee(-)$.

\section{Compact irreducible subspaces of complement graphs}
\label{sec:graph}

For a complete Boolean algebra $B$, define

$$  G_B=\{(b,\neg b):b\in B\}
  \quad\text{and}\quad
  \Delta_B=\{(a,c)\in B^2:a\wedge c=0\}.$$

\noindent The first set is an antichain: if
$(b,\neg b)\leq(c,\neg c)$, then $b\leq c$ and $\neg b\leq\neg c$, and hence
$c\leq b$ and $b=c$.

\begin{lemma}\label{lem:delta-closed}
For every complete Boolean algebra $B$, the set $\Delta_B$ is Scott closed in
$B^2$.
\end{lemma}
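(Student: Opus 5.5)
We verify the two defining properties of a Scott closed set directly: $\Delta_B$ must be a lower set, and it must be closed under suprema of directed subsets. Throughout, $B^2$ carries the coordinatewise order, so directed suprema in $B^2$ are computed coordinatewise.

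\emph{Lower set.} Suppose $(a',c')\leq(a,c)$ with $(a,c)\in\Delta_B$. Then $a'\wedge c'\leq a\wedge c=0$, so $(a',c')\in\Delta_B$.

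\emph{Closure under directed suprema.} Let $D\subseteq\Delta_B$ be directed with supremum $(a,c)$. Here $a=\bigvee_{(x,y)\in D}x$ and $c=\bigvee_{(x,y)\in D}y$. We must show $a\wedge c=0$. Applying the infinite distributive law from Section~\ref{sec:prelim} twice gives
$$a\wedge c=\bigvee_{(x,y)\in D}\ \bigvee_{(x',y')\in D}(x\wedge y').$$
It therefore suffices to show that every term $x\wedge y'$ vanishes.

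Fix $(x,y),(x',y')\in D$. By directedness there is $(u,v)\in D$ above both. Then $x\wedge y'\leq u\wedge v=0$, because $(u,v)\in\Delta_B$. Hence every term is $0$, so $a\wedge c=0$ and $(a,c)\in\Delta_B$.

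The only substantive step is handling the cross terms $x\wedge y'$ coming from different members of $D$, and directedness resolves this. Note that the argument uses only the frame distributivity of $B$ (the identity recorded in Section~\ref{sec:prelim}) and not complementation. It therefore shows that $\Delta_B$ is Scott closed in the square of any complete Heyting algebra.
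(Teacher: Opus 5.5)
Your proposal is correct and follows essentially the same route as the paper. You check that $\Delta_B$ is a lower set, expand $a\wedge c$ by distributivity into cross terms $x\wedge y'$, and kill each cross term using a common upper bound in $D$. Your closing remark is also right: only frame distributivity is used, so the argument works verbatim for any complete Heyting algebra.
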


\begin{proof}
It is a lower set.  Let $D\subseteq\Delta_B$ be directed and put

$$  a=\bigvee_{(u,v)\in D}u,~ c=\bigvee_{(u,v)\in D}v.$$

\noindent For $(u_1,v_1),(u_2,v_2)\in D$, choose $(u_3,v_3)\in D$ above both.  Then
$u_1\wedge v_2\leq u_3\wedge v_3=0$.  Distributivity of finite meets over arbitrary joins gives

$$  a\wedge c
   =\bigvee_{(u_1,v_1),(u_2,v_2)\in D}(u_1\wedge v_2)=0.$$

\noindent Thus $\bigvee D=(a,c)$ belongs to $\Delta_B$.
\end{proof}

The next observation will also be used later.

\begin{lemma}\label{lem:compact-dual-closed}
For complete lattice $L$, every compact saturated subset of $\Sigma L$
is closed in the dual Scott topology.
\end{lemma}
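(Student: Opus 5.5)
We have to show that a compact saturated set $K\subseteq\Sigma L$ is closed in the dual Scott topology. Since $K$ is saturated in $\Sigma L$, it is an upper set. So the only thing to prove is that $K$ is closed under infima of filtered subsets. Let $F\subseteq K$ be filtered and put $x=\bigwedge F$, which exists because $L$ is complete. The goal is $x\in K$.

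We argue by contradiction and suppose $x\notin K$. For each $f\in F$ consider the set $U_f=L\setminus{\downarrow}f$, which is Scott open because ${\downarrow}f$ is Scott closed. Now take any $k\in K$. Since $k\in K$ and $K$ is an upper set while $x\notin K$, we have $k\not\leq x$, so $k$ is not a lower bound of $F$. Hence there is $f\in F$ with $k\not\leq f$, i.e. $k\in U_f$. Thus $\{U_f:f\in F\}$ is an open cover of $K$.

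Next we use that the cover is directed. If $f\leq g$, then ${\downarrow}f\subseteq{\downarrow}g$, and so $U_g\subseteq U_f$. Because $F$ is filtered, any finite subfamily $U_{f_1},\dots,U_{f_n}$ is contained in $U_{f_0}$ for some lower bound $f_0\in F$ of $f_1,\dots,f_n$. Compactness of $K$ gives a finite subcover, and by the previous remark this means $K\subseteq U_{f_0}$ for a single $f_0\in F$. But $f_0\in F\subseteq K$ and $f_0\notin U_{f_0}$, since $f_0\in{\downarrow}f_0$. This is a contradiction, so $x\in K$ and $K$ is dual-Scott closed.

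The argument is routine. The one step that needs an idea is choosing the complements of principal ideals as the cover and noticing that the filteredness of $F$ makes this cover directed, so that compactness reduces it to a single member.
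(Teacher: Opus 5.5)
Your proof is correct and follows essentially the same route as the paper: assuming $\bigwedge F\notin K$, you cover $K$ by the Scott-open sets $L\setminus{\downarrow}f$ for $f\in F$, and use filteredness to rule out any finite subcover. Your observation that this cover is directed is just another way of stating the paper's point that a common lower bound $d\in F$ of $d_1,\dots,d_n$ lies in $K$ but in none of the sets $L\setminus{\downarrow}d_i$.
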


\begin{proof}
Let $K$ be compact and saturated.  Then $K$ is an upper set.  It is enough to
show that $K$ is closed under infima of filtered subsets.  Let
$E\subseteq K$ be filtered and set $e=\bigwedge E$.  Suppose that
$e\notin K$.  For $d\in E$, the set $U_d=L\setminus\downarrow d$ is Scott open.  The family $\{U_d:d\in E\}$ covers $K$: otherwise some
$x\in K$ would satisfy $x\leq d$ for every $d\in E$, whence $x\leq e$ and,
since $K$ is upper, $e\in K$.

No finite subfamily of $\{U_d:d\in E\}$ covers $K$.  Indeed, for $d_1,\ldots,d_n\in E$, choose
$d\in E$ with $d\leq d_i$ for all $i$.  Then $d\in K$ and
$d\notin U_{d_i}$ for every $i$.  This contradicts compactness.  Therefore
$e\in K$.
\end{proof}

\begin{proposition}\label{prop:graph-kc}
For every complete Boolean algebra $B$, the complement graph $G_B$, with the
subspace topology inherited from  $\Sigma~\!\!B^2$ , is a KC-space.
\end{proposition}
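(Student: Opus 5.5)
The plan is to show directly that every compact subset $K$ of $G_B$ is closed in $G_B$. To do this we exhibit a Scott closed set $F\subseteq B^2$ with $F\cap G_B=K$. Two ingredients are used. First, since $K$ is compact in the subspace $G_B$, it is also compact in $\Sigma B^2$. Hence ${\uparrow}K$ is a compact saturated subset of $\Sigma B^2$. As $B^2$ is a complete lattice, Lemma~\ref{lem:compact-dual-closed} shows that ${\uparrow}K$ is closed under infima of filtered subsets. Second, Lemma~\ref{lem:delta-closed} says that $\Delta_B$ is Scott closed. We combine the two through the order-reversing map $(a,c)\mapsto(\neg c,\neg a)$.

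Concretely, we define
$$F=\{(a,c)\in\Delta_B:(\neg c,\neg a)\in{\uparrow}K\}.$$
For $(a,c)\in\Delta_B$ we have $a\leq\neg c$ and $c\leq\neg a$, so $(a,c)\leq(\neg c,\neg a)$. On $G_B$ the map fixes points: $(b,\neg b)\mapsto(b,\neg b)$. Therefore $(b,\neg b)\in F$ if and only if $(b,\neg b)\in{\uparrow}K$. Since $G_B$ is an antichain, this holds exactly when $(b,\neg b)\in K$. This gives $F\cap G_B=K$.

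Next we check that $F$ is Scott closed.

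\emph{Lower set.} If $(a',c')\leq(a,c)\in F$, then $(a',c')\in\Delta_B$, because $\Delta_B$ is a lower set. Moreover $(\neg c',\neg a')\geq(\neg c,\neg a)\in{\uparrow}K$, and ${\uparrow}K$ is an upper set.

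\emph{Directed suprema.} Let $D\subseteq F$ be directed with supremum $(a,c)$. By Lemma~\ref{lem:delta-closed}, $(a,c)\in\Delta_B$. The infinite De Morgan laws in a complete Boolean algebra give
$$(\neg c,\neg a)=\bigwedge_{(u,v)\in D}(\neg v,\neg u).$$
The family $\{(\neg v,\neg u):(u,v)\in D\}$ is filtered, since the map is order reversing and $D$ is directed, and it lies in ${\uparrow}K$. By Lemma~\ref{lem:compact-dual-closed} its infimum lies in ${\uparrow}K$. Hence $(a,c)\in F$.

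Thus $F$ is Scott closed and $K=F\cap G_B$ is closed in $G_B$. The main obstacle is the choice of $F$: the naive Scott closure of $K$ is hard to control. The twist $(a,c)\mapsto(\neg c,\neg a)$ is what turns directed joins inside $\Delta_B$ into filtered meets, where Lemma~\ref{lem:compact-dual-closed} applies.
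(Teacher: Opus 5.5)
Your proof is correct and is essentially the paper's argument. Both use Lemma~\ref{lem:compact-dual-closed} to get ${\uparrow}K$ dual-Scott closed, transport it by a complement-based order anti-isomorphism of $B^2$ to a Scott closed set, and cut back to $K$ using that $G_B$ is an antichain. The paper applies $\kappa(x,y)=(\neg x,\neg y)$ and then the coordinate-swap homeomorphism, whereas your map $(a,c)\mapsto(\neg c,\neg a)$ fixes $G_B$ pointwise and does both steps at once. Also, your $F$ is just ${\downarrow}K$, which already lies in $\Delta_B$, so the appeal to Lemma~\ref{lem:delta-closed} is harmless but unnecessary.
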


\begin{proof}
Let $A\subseteq G_B$ be compact.  Its saturation ${\uparrow} A$ is compact in
 $\Sigma~\!\!B^2$ : every Scott-open cover of ${\uparrow} A$ restricts to one of
$A$, and a finite subcover of $A$ covers ${\uparrow} A$ because Scott-open sets
are upper.

By Lemma~\ref{lem:compact-dual-closed}, $\uparrow A$ is dual-Scott closed.
The map

$$  \kappa:B^2\longrightarrow B^2,
  ~ \kappa(x,y)=(\neg x,\neg y),$$

\noindent is an order anti-isomorphism.  Hence

$$  \kappa({\uparrow} A)={\downarrow}\kappa(A) $$

\noindent is Scott closed.  Since $G_B$ is an antichain,

$$  {\downarrow}\kappa(A)\cap G_B=\kappa(A).$$

\noindent Thus $\kappa(A)$ is closed in $G_B$.  On $G_B$, the map $\kappa$ agrees with
the coordinate swap $(x,y)\mapsto(y,x)$, which is a Scott homeomorphism of
$B^2$.  Therefore $A$ is closed in $G_B$.
\end{proof}

We can now isolate the topological object needed for a counterexample.

\begin{theorem}[Complement-graph reduction]\label{thm:reduction}
Let $B$ be a complete Boolean algebra.  If $G_B$ contains a non-singleton
compact irreducible subspace $M$, then $\Sigma~\!\!B^2$ is not co-sober.
\end{theorem}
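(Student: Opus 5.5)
Let $K={\uparrow}M$, the saturation of $M$ in $\Sigma B^2$. I will show that $K$ is a nonempty compact saturated set that is $k$-irreducible but not of the form ${\uparrow}x$. That witnesses the failure of co-sobriety.

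\textbf{Compactness and minimal elements.} Compactness of $K$ follows as in the proof of Proposition~\ref{prop:graph-kc}: a Scott-open cover of $K$ restricts to a cover of $M$, and a finite subcover of $M$ covers ${\uparrow}M$ because Scott-open sets are upper. Since $G_B$ is an antichain, every element of $M$ is minimal in $K$, and every minimal element of $K$ lies in $M$. Now suppose $K={\uparrow}x$. Then $x\in K$, so $x\geq m$ for some $m\in M$, and $m\geq x$ because $m\in{\uparrow}x$. Hence $x=m\in M$. Every other $m'\in M$ then satisfies $m'\geq m$, so $m'=m$ by the antichain property. This contradicts $M$ being non-singleton. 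So $K$ is not principal.

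\textbf{The disjointness locus recovers $M$.} The key tool is the Scott-closed set $\Delta_B$ from Lemma~\ref{lem:delta-closed}. If $(x,y)\geq(b,\neg b)$ and $x\wedge y=0$, then $y\leq\neg x\leq\neg b\leq y$, which forces $x=b$ and $y=\neg b$. Hence for every upper set $U\subseteq B^2$ we have
\[ U\cap\Delta_B=\{(b,\neg b): (b,\neg b)\in U\}\quad\text{whenever } U\subseteq{\uparrow}G_B .\]
In particular $K\cap\Delta_B=M$.

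\textbf{$k$-irreducibility.} Suppose $K=K_1\cup K_2$ with $K_1,K_2\in\cQ(\Sigma B^2)$. Put $M_i=K_i\cap\Delta_B$, which by the previous step equals $K_i\cap G_B$. Then $M=M_1\cup M_2$ and $K_i={\uparrow}M_i$, since each $K_i$ is an upper set inside ${\uparrow}M$ and each of its elements lies above some point of $M$ belonging to $K_i$.

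The main obstacle is to turn this into a statement about closed subsets of $M$. Each $M_i=K_i\cap G_B$ is compact, because $\Delta_B$ is closed and a closed subset of a compact set is compact. Since $G_B$ is a KC-space by Proposition~\ref{prop:graph-kc}, each $M_i$ is closed in $G_B$, and hence closed in $M$. Thus $M=M_1\cup M_2$ is a union of two closed subsets of the irreducible space $M$, so $M=M_1$ or $M=M_2$. Then $K=K_1$ or $K=K_2$, which proves that $K$ is $k$-irreducible.

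Combining the three steps, $K$ is $k$-irreducible and nonprincipal, so $\Sigma B^2$ is not co-sober. The delicate points to check carefully are the identity $K_i={\uparrow}(K_i\cap G_B)$ and the compactness of $K_i\cap\Delta_B$ as a closed subset of the compact set $K_i$.
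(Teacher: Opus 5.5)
Your route is the same as the paper's. You saturate $M$, cut each $K_i$ back to $K_i\cap\Delta_B\subseteq M$ using the Scott-closed set $\Delta_B$, and note that this set is compact and hence closed in $G_B$ by the KC property. You then apply irreducibility of $M$, and use the antichain property to show ${\uparrow}M$ is not principal.

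There is one false step. The identity $K_i={\uparrow}(K_i\cap G_B)$ does not hold in general, and your justification runs the wrong way. $K_i$ is an upper set, so from $x\in K_i$ and $m\leq x$ with $m\in M$ you cannot conclude $m\in K_i$. For a concrete failure, take $K_1={\uparrow}(1,1)=\{(1,1)\}$ and $K_2=K$. Then $K_1\cup K_2=K$. Since $M$ is non-singleton, $B$ is nontrivial, so $\neg 1=0\neq 1$, hence $(1,1)\notin G_B$. Therefore $K_1\cap G_B=\varnothing$ and $K_1\neq{\uparrow}(K_1\cap G_B)$.

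You do not need this identity. Once irreducibility gives $M=M_i$, you have $M\subseteq K_i$. Since $K_i$ is an upper set contained in $K$, this gives $K={\uparrow}M\subseteq K_i\subseteq K$, so $K=K_i$. This is exactly how the paper concludes, so you should drop the identity and use this line instead.

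The rest is correct: the compactness of $K_i\cap\Delta_B$ as a relatively closed subset of $K_i$, its closedness in $G_B$ and hence in $M$ via Proposition~\ref{prop:graph-kc}, the decomposition $M=M_1\cup M_2$, and the non-principality argument. One small point: your ``in particular $K\cap\Delta_B=M$'' silently uses $K\cap G_B=M$. That step needs the antichain property of $G_B$ and should be stated.
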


\begin{proof}
Set $K=\uparrow M$. Then $K$ is compact and saturated.  We first note
that
\begin{equation}\label{eq:recover-M}
 ~~~~~~~~~~~~~~~~~~~~~~~~~~~~~~~~~~~~~~~~~~~~~~~~~~~~~K\cap\Delta_B=M.
\end{equation}
The inclusion $M\subseteq K\cap\Delta_B$ is immediate.  Conversely, let
$(a,c)\in K\cap\Delta_B$.  Then
$(b,\neg b)\leq(a,c)$ for some $(b,\neg b)\in M$.  Hence

$$  b\leq a,
  ~ \neg b\leq c,
  ~ a\leq\neg c.$$

\noindent The second inequality gives $\neg c\leq b$, so
$b\leq a\leq\neg c\leq b$.  Therefore $(a,c)=(b,\neg b)\in M$.

Suppose that $K=K_1\cup K_2$ with $K_1,K_2$ compact and saturated.  Put
$A_i=K_i\cap\Delta_B$.  By Lemma~\ref{lem:delta-closed}, each $A_i$ is
compact.  Equation~\eqref{eq:recover-M} shows that $A_i\subseteq M\subseteq
G_B$.  Proposition~\ref{prop:graph-kc} implies that $A_i$ is closed in
$G_B$, hence in $M$.  Moreover,

$$  M=(K_1\cap\Delta_B)\cup(K_2\cap\Delta_B)=A_1\cup A_2.$$

\noindent The irreducibility of $M$ yields $M=A_1$ or $M=A_2$.  If $M=A_1$, then the
upperness of $K_1$ gives
$K=\uparrow M\subseteq K_1\subseteq K$, so $K_1=K$.  The other case is the
same.  Thus $K$ is $k$-irreducible.

Finally,
\begin{equation}\label{eq:minK}
  ~~~~~~~~~~~~~~~~~~~~~~~~~~~~~~~~~~~~~~~~~~~~~~~~~~~~~~ \Min K=M.
\end{equation}
Indeed, if $m\in M$ and $x\in K$ satisfies $x\leq m$, choose $m'\in M$ with
$m'\leq x$.  Since $G_B$ is an antichain, $m'=m$ and $x=m$.  Conversely,
if $x$ is minimal in $K$, choose $m\in M$ with $m\leq x$; minimality gives
$x=m$.  Since $M$ is not a singleton, $K$ cannot be a principal upper set.
Therefore $\Sigma~\!\!B^2$ is not co-sober.
\end{proof}

The same subspace also detects non-sobriety.

\begin{proposition}\label{prop:nonsober-reduction}
Under the hypotheses of Theorem~\ref{thm:reduction}, the Scott space
 $\Sigma~\!\!B^2$  is not sober.
\end{proposition}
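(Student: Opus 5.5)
The plan is to produce an irreducible Scott-closed subset of $B^2$ that is not the closure of a point. The natural candidate is $A=\cl_{\Sigma B^2}(M)$, the Scott closure of $M$ in $B^2$.

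First, $A$ is irreducible. The closure of an irreducible subspace is irreducible: if $A\subseteq F_1\cup F_2$ with $F_i$ Scott closed, then $M\subseteq (F_1\cap M)\cup(F_2\cap M)$. Irreducibility of $M$ gives $M\subseteq F_i$ for some $i$, hence $A\subseteq F_i$.

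Next, suppose toward a contradiction that $A=\cl\{x\}={\downarrow}x$ for some $x\in B^2$. Then $x$ is an upper bound of $M$, and $x\in A$. By Lemma~\ref{lem:delta-closed}, $\Delta_B$ is Scott closed and contains $M$, so $A\subseteq\Delta_B$ and in particular $x=(a,c)$ satisfies $a\wedge c=0$. Since $x$ bounds $M$, every $(b,\neg b)\in M$ satisfies $b\leq a$ and $\neg b\leq c$. From $\neg b\leq c$ we get $\neg c\leq b$, and from $a\wedge c=0$ we get $a\leq\neg c$. Hence $b\leq a\leq\neg c\leq b$, so $b=a$ for every $(b,\neg b)\in M$. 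This is exactly the computation used for \eqref{eq:recover-M}. It forces $M$ to be a singleton, contradicting the hypothesis.

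Therefore $A$ is an irreducible closed set that is not the closure of any point, and $\Sigma B^2$ is not sober. The only delicate step is showing the putative generic point lies in $\Delta_B$, and that comes directly from Lemma~\ref{lem:delta-closed}. Compactness of $M$ and Proposition~\ref{prop:graph-kc} are not needed for this argument; only irreducibility and the non-singleton condition are used.
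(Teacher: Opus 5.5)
Your proof is correct, but it takes a different route from the paper's.

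The paper works with $F={\downarrow}M$ and must first show that this lower set is Scott closed. For that it uses compactness of $M$: it takes the compact saturated set ${\uparrow}\kappa(M)$, applies Lemma~\ref{lem:compact-dual-closed}, and transports back along the anti-isomorphism $\kappa$. It then reads off $\Max F=M$ and concludes that $F$ is not of the form ${\downarrow}x$.

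You instead take the Scott closure $\cl M$, whose irreducibility is automatic. You rule out a generic point by noting that it would be an upper bound of $M$ lying in the Scott-closed set $\Delta_B$ of Lemma~\ref{lem:delta-closed}. The points of $G_B$ are maximal in $\Delta_B$, by the same computation as in \eqref{eq:recover-M}.

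Your route is shorter and more general. It shows that any non-singleton irreducible subspace of $G_B$, compact or not, witnesses non-sobriety of $\Sigma B^2$. So, as you say, compactness and Proposition~\ref{prop:graph-kc} are not needed here.

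What the paper's route buys is an explicit description of the offending closed set. Under compactness, $\cl M={\downarrow}M$ with $\Max({\downarrow}M)=M$, which parallels the description $\Min K=M$ used for the failure of co-sobriety. Your argument only places $\cl M$ inside $\Delta_B$ without identifying it exactly.
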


\begin{proof}
Let $F=\downarrow M$.  The set $\kappa(M)$ is compact in $G_B$, hence
$\uparrow\kappa(M)$ is compact saturated in  $\Sigma~\!\!B^2$ .  By
Lemma~\ref{lem:compact-dual-closed} it is dual-Scott closed, and applying
$\kappa$ shows that

$$  F=\kappa({\uparrow}\kappa(M))$$

\noindent is Scott closed.

If Scott-open sets $U$ and $V$ both meet $F$, their upperness implies that
each meets $M$.  The sets $U\cap M$ and $V\cap M$ are nonempty open subsets
of the irreducible space $M$, so $U\cap V\cap M\neq\varnothing$.  Hence $F$
is irreducible. We also have $\Max F=M$. Indeed, every $m\in M$ is
maximal in $F$ because $M$ is an antichain; conversely, if $x$ is maximal in
$F$, then $x\leq m$ for some $m\in M$, and maximality forces $x=m$.
Because $M$ is not a singleton, $F$ is not of the form ${\downarrow} x$, the
closure of a point in a Scott space. Thus $\Sigma~\!\!B^2$ is not sober.
\end{proof}

\section{Boolean realization of compact sequential \emph{US}-spaces}
\label{sec:realization}

We now construct a complete Boolean algebra whose complement graph contains
a prescribed compact sequential \emph{US}-space.  The construction uses only
finite constraints, but it imposes one eventual relation for each convergent
sequence.

\subsection{Finite tail constraints}

Let $X$ be a \emph{US}-space.  Write $\cS(X)$ for the set of convergent sequences in
$X$.  If $s=(s_n)_{n\in\N}\in\cS(X)$, its unique limit is denoted by
$s_\infty$.

The following result is well-known and can be easily verified (see, e.g., \cite{Wilansky-1967}).

\begin{lemma}\label{lem:US-T1}
Every US-space is $T_1$.
\end{lemma}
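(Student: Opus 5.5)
To prove Lemma~\ref{lem:US-T1}, I will show directly that every singleton $\{x\}$ in a \emph{US}-space $X$ is closed. Equivalently, for distinct points $x,y\in X$ I will find an open set containing $y$ but not $x$.

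The key device is the constant sequence. Fix distinct $x,y\in X$ and argue by contradiction: suppose every open neighbourhood of $y$ contains $x$, i.e.\ $y\in\cl\{x\}$. Consider the constant sequence $s_n=x$ for all $n\in\N$.
\begin{itemize}
\item It converges to $x$, since every neighbourhood of $x$ contains all its terms.
\item It also converges to $y$, since every open set containing $y$ contains $x$, hence all terms of the sequence.
\end{itemize}
Thus this convergent sequence has the two distinct limits $x$ and $y$, contradicting the \emph{US} property. Hence some open set contains $y$ but not $x$.

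Since $x$ and $y$ were arbitrary distinct points, each pair can be separated in both directions. So $X$ is $T_1$; equivalently, $\cl\{x\}=\{x\}$ for every $x$.

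I do not expect a genuine obstacle. The only point needing care is that the paper makes no separation assumption in its definition of convergence, so a sequence may in principle have several limits. The argument uses exactly this freedom: non-$T_1$-ness produces a constant sequence with two limits, and the \emph{US} axiom rules that out.
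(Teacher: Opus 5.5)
Your argument is correct: if $y\in\cl\{x\}$ with $y\neq x$, then the constant sequence at $x$ converges to both $x$ and $y$, which violates the \emph{US} property, and applying this to both orderings of a pair gives $T_1$. The paper gives no proof of its own, calling the fact well known and citing Wilansky, and your constant-sequence argument is the standard verification it refers to.
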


\begin{lemma}\label{lem:finite-overlap}
Let $s_n\to x$ and $t_n\to y$ in a US-space, where $x\neq y$.  Then the two
ranges $\{s_n:n\in\N\}$ and $\{t_n:n\in\N\}$ have finite intersection.
\end{lemma}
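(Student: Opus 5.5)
We argue by contradiction. Suppose the intersection $I=\{s_n:n\in\N\}\cap\{t_n:n\in\N\}$ is infinite. The plan is to extract from $I$ a single sequence that is simultaneously a subsequence (up to reindexing) of both $(s_n)$ and $(t_n)$. Such a sequence converges to both $x$ and $y$, which contradicts the US property because $x\neq y$.

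The one point needing care is the passage from an infinite set of shared values to a genuine subsequence. Here Lemma~\ref{lem:US-T1} is used. Since the space is $T_1$ and $s_n\to x$, each value $p\neq x$ is taken by $s_n$ for only finitely many indices $n$. Otherwise a constant subsequence $s_{n_k}=p$ would converge to $x$, and in a $T_1$ space a constant sequence at $p$ converges only to $p$: the open set $X\setminus\{p\}$ contains $x$ but no term. The same holds for $(t_n)$ and values different from $y$.

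Now remove $x$ and $y$ from $I$; what remains is still infinite. Choose distinct points $p_1,p_2,\ldots$ in it. Each $p_k$ occurs as a value of $(s_n)$ only at finitely many indices, so we can select indices $n_1<n_2<\cdots$ with $s_{n_k}=p_{j_k}$ for pairwise distinct $j_k$. To do this, at stage $k$ discard the finitely many $p_j$ whose occurrence indices all lie at or below $n_{k-1}$; infinitely many $p_j$ remain.

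Then $(s_{n_k})$ is a subsequence of $(s_n)$, so it converges to $x$. Its terms $q_k=s_{n_k}$ are distinct points of the range of $(t_n)$. Pick an index $m_k$ with $t_{m_k}=q_k$. Because each value $\neq y$ appears only finitely often in $(t_n)$, and the $q_k$ are distinct, we have $m_k\to\infty$.

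It remains to check that $(q_k)$ converges to $y$. Let $V$ be a neighbourhood of $y$. Then $t_m\in V$ for all $m\ge N$, and $m_k\ge N$ for all large $k$, so $q_k\in V$ for all large $k$. Thus $(q_k)$ converges to both $x$ and $y$, contradicting the US property. Hence $I$ is finite.

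The only step needing attention is the finiteness of fibres, which supplies both the subsequence extraction and $m_k\to\infty$. The rest is routine.
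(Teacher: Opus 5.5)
Your proof is correct and follows the same route as the paper: you pick distinct common values with increasing $s$-indices, obtaining a sequence that converges to $x$, and you show it also converges to $y$. Where the paper passes to an increasing subsequence of the $t$-indices, you check $m_k\to\infty$ directly, which amounts to the same thing. Your appeal to Lemma~\ref{lem:US-T1} and to finiteness of fibres is unnecessary, however: distinct values automatically occur at distinct indices, and this alone gives both the increasing $s$-indices and $m_k\to\infty$, so the paper's argument needs nothing beyond the US property itself.
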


\begin{proof}
If the intersection contained infinitely many distinct points, choose
pairwise distinct common values whose indices in the sequence $s$ increase.
Their chosen indices in $t$ are distinct, and hence have an increasing
subsequence.  The corresponding common-value subsequence would converge to
both $x$ and $y$, contrary to the \emph{US} property.
\end{proof}

The next two elementary lemmas isolate the topological and relational
ingredients needed in the construction.

\begin{lemma}\label{lem:tail-block-avoidance}
Suppose that $s_n\to x$ and $t_n\to y$ in a US-space, where $x\neq y$.  If

 $$ x\notin R(t,N):=\{y\}\cup\{t_n:n\geq N\},$$

\noindent then the set $\{n:s_n\in R(t,N)\}$ is finite.
\end{lemma}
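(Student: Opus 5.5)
The plan is to argue by contradiction and split according to which part of $R(t,N)$ the terms $s_n$ land in. Suppose that infinitely many indices $n$ satisfy $s_n\in R(t,N)$. Then infinitely many of them satisfy $s_n=y$, or infinitely many satisfy $s_n\in\{t_m:m\geq N\}$. We treat the two cases separately.

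\emph{Case 1: infinitely many $n$ satisfy $s_n=y$.} The subsequence of $s$ along these indices is the constant sequence $y$. A constant sequence converges to its value, and any subsequence of $s$ converges to $x$. The \emph{US} property then forces $x=y$, which contradicts the hypothesis $x\neq y$.

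\emph{Case 2: infinitely many $n$ satisfy $s_n\in\{t_m:m\geq N\}$.} First suppose the set of values $\{s_n\}$ along these indices is finite. Then some single value $z=t_m$ with $m\geq N$ occurs infinitely often. The constant subsequence at $z$ converges to $z$ and also to $x$, so $z=x$. This gives $x\in R(t,N)$, which contradicts the hypothesis. Otherwise infinitely many distinct points lie in both ranges $\{s_n:n\in\N\}$ and $\{t_n:n\in\N\}$. This contradicts Lemma~\ref{lem:finite-overlap}, since $s\to x$, $t\to y$ and $x\neq y$.

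Both cases are impossible, so the set $\{n:s_n\in R(t,N)\}$ is finite. The only subtle point is Case 2. There, repetition of a value inside the tail of $t$ has to be separated from infinitely many distinct values, because Lemma~\ref{lem:finite-overlap} only controls distinct points. The hypothesis $x\notin R(t,N)$ is exactly what is needed to exclude the repeated-value situation, and it also handles Case 1 through $y\in R(t,N)$. The $T_1$ property from Lemma~\ref{lem:US-T1} is not needed.
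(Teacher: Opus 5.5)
Your proof is correct and follows essentially the paper's argument: the finite-overlap Lemma~\ref{lem:finite-overlap} rules out infinitely many distinct values of $s$ in the tail of $t$, and any value hit infinitely often would have to be $x$, which $x\notin R(t,N)$ excludes. The only difference is that you get the second fact by applying the US property to a constant subsequence instead of citing the $T_1$ property from Lemma~\ref{lem:US-T1}; this is a harmless, slightly more self-contained variant.
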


\begin{proof}
By Lemma~\ref{lem:finite-overlap}, the two ranges have only finitely many
common values.  The point $y$ adds at most one further value.  None of these
values is $x$.  Since the space is $T_1$ by
Lemma~\ref{lem:US-T1}, a sequence converging to $x$ takes each fixed value
$z\neq x$ only finitely often.  Hence $s$ enters $R(t,N)$ only finitely
often.

%By Lemma~\ref{lem:finite-overlap}, the set

%$$  A=\{s_k:k\in\N\}\cap\{t_n:n\geq N\}$$

%\noindent is finite.  The hypothesis gives $x\notin A\cup\{y\}$.  Since the space
%is $T_1$ by Lemma~\ref{lem:US-T1}, a sequence converging to $x$ takes each
%fixed value $z\neq x$ only finitely often.  The finite set
%$A\cup\{y\}$ contains every value assumed by $s$ while it lies in
%$R(t,N)$, so $s$ enters $R(t,N)$ only finitely often.
\end{proof}

\begin{lemma}[Isolation outside the old tail blocks]\label{lem:finite-component-structure}
Let

$$  \Gamma=\{(t^1,N_1),\ldots,(t^m,N_m)\}$$

\noindent be finite, and let $\sim_\Gamma$ be the equivalence relation generated by
$t^j_n\sim_\Gamma t^j_\infty$ for $n\geq N_j$.  For a
$\sim_\Gamma$-class $C$, put

$$  J_C=\{j:t^j_\infty\in C\}.$$

\noindent Then either $J_C=\varnothing$ and $C$ is a singleton, or

$$  C=\bigcup_{j\in J_C}
       \bigl(\{t^j_\infty\}\cup\{t^j_n:n\geq N_j\}\bigr).$$

\noindent In particular, every non-singleton class is a union of finitely many tail
blocks whose limits lie in that class.
\end{lemma}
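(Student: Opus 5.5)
The plan is to describe the equivalence relation $\sim_\Gamma$ concretely through a graph on $X$. Declare an edge between $t^j_n$ and $t^j_\infty$ for every $j\le m$ and $n\geq N_j$, and write $T_j=\{t^j_\infty\}\cup\{t^j_n:n\geq N_j\}$ for the $j$-th tail block. Then $\sim_\Gamma$ is exactly the relation of lying in the same connected component of this graph, together with the reflexive pairs. Every edge lies inside some $T_j$, and each $T_j$ is connected, since it is a star centred at $t^j_\infty$. Hence the components are precisely the singletons not covered by any $T_j$, together with the unions of the blocks $T_j$ that are chained by nonempty overlaps.

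Now fix a class $C$. First suppose $C$ contains a point $z$ incident to some edge. Then $z\in T_j$ for some $j$. Since $T_j$ is connected, $T_j\subseteq C$, and in particular $t^j_\infty\in C$, so $j\in J_C$. This gives the inclusion
$$
C\subseteq\bigcup_{j\in J_C}T_j ,
$$
because any point of $C$ lies in some block $T_j$ with $T_j\subseteq C$. For the reverse inclusion, take $j\in J_C$. Then $t^j_\infty\in C$, and the connected block $T_j$ contains $t^j_\infty$, so $T_j\subseteq C$.

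Next suppose no point of $C$ is incident to an edge. Then $C$ is a singleton $\{z\}$, since equivalence is generated only by edges. Moreover $z$ is not $t^j_\infty$ for any $j$: the sequence $t^j$ has infinitely many terms beyond $N_j$, so $t^j_\infty$ is an endpoint of an edge, possibly a loop. The only subtlety is the case where $t^j_n=t^j_\infty$ for all $n\geq N_j$. Even then $t^j_\infty\in T_j$, so $z$ lies in a tail block, and the first case applies. Hence in this second case $J_C=\varnothing$.

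It remains to rule out a mixed situation: a class with $J_C=\varnothing$ that is not a singleton. If $C$ has two points, an edge lies inside $C$, so by the first case $J_C\neq\varnothing$. This gives the dichotomy. The final sentence of the statement follows at once, since $J_C\subseteq\{1,\ldots,m\}$ is finite.

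I expect no real obstacle here. The only care needed is the generation step: one must check that the union of blocks produced in the first case is closed under the relation, which is the reverse inclusion shown above, and handle degenerate constant tails. Note that the US and $T_1$ hypotheses are not needed for this purely combinatorial lemma.
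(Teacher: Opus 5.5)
Your proof is correct and follows the same route as the paper. Both view the generating pairs as edges of a graph on $X$, identify the $\sim_\Gamma$-classes with its connected components, and use that each tail block $\{t^j_\infty\}\cup\{t^j_n:n\geq N_j\}$ is a star centred at $t^j_\infty$, so a component meeting a block contains its centre and the whole block. Your treatment of loops (constant tails equal to the limit) and of why every point of a component containing an edge lies in some block fills in details the paper leaves implicit.
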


\begin{proof}
View the generating relations as the edges of an undirected graph on $X$.
The equivalence classes are its connected components.  If a vertex of a
component is incident with an edge arising from $(t^j,N_j)$, then the center
$t^j_\infty$ belongs to the same component; conversely, every tail term of a
center in the component lies in that component.  A component containing no
center is therefore an isolated singleton.  Since $\Gamma$ is finite, so is
$J_C$.
\end{proof}

Define a poset $P_X$ as follows.  A condition is a pair
$p=(\Gamma_p,\varepsilon_p)$, where
\begin{itemize}
\item $\Gamma_p$ is a finite subset of $\cS(X)\times\N$;
\item $\varepsilon_p:X\rightharpoonup\{0,1\}$ is a finite partial map.
\end{itemize}
The set $\Gamma_p$ generates an equivalence relation $\sim_p$ on $X$: for
every $(s,N)\in\Gamma_p$ and every $n\geq N$, impose
$s_n\sim_p s_\infty$, and take the equivalence closure.  The condition $p$ is
admissible if
\begin{equation}\label{eq:admissible}
 ~~~~~~~~~~~~~~~~~~~~~~~~~~~~~~~~~ u\sim_p v,\
  u,v\in\operatorname{dom}(\varepsilon_p)
  \quad\Longrightarrow\quad
  \varepsilon_p(u)=\varepsilon_p(v).
\end{equation}
Only admissible pairs are retained. An equivalence class is called
\emph{colored} if it meets $\operatorname{dom}(\varepsilon_p)$; its color is
well-defined by \eqref{eq:admissible}.

Let $P_X$ be the set of all admissible pairs. For $p=(\Gamma_p, \varepsilon_p), q=(\Gamma_q, \varepsilon_q)\in P_X$, put $q\leq p$ if

$$  \Gamma_q\supseteq\Gamma_p
  \quad\text{and}\quad
  \varepsilon_q\supseteq\varepsilon_p.$$

\noindent Thus smaller conditions carry more information.

For $s\in\cS(X)$, let

$$  D_s=\{p\in P_X:(s,N)\in\Gamma_p\text{ for some }N\in\N\}.$$

\begin{lemma}[Tail-density lemma]\label{lem:tail-density}
For every $s\in\cS(X)$, the set $D_s$ is dense in $P_X$.
\end{lemma}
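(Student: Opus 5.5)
We prove the statement by direct extension: given an arbitrary $p=(\Gamma_p,\varepsilon_p)\in P_X$, we look for $q\leq p$ in $D_s$ of the form
$$q=(\Gamma_p\cup\{(s,N)\},\ \varepsilon_p)$$
with $N$ large. Then $q\leq p$ and $q\in D_s$ hold automatically. The only thing to check is admissibility~\eqref{eq:admissible} of $q$. The idea is to choose $N$ so large that the tail $\{s_n:n\geq N\}$ only touches the $\sim_p$-class $C$ of $s_\infty$ and uncolored singleton classes. Then adding $(s,N)$ merges $C$ with uncolored singletons only, and no color conflict can arise.

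\textbf{Choosing $N$.} Write $\Gamma_p=\{(t^1,N_1),\ldots,(t^m,N_m)\}$, and put $R_j=R(t^j,N_j)$ as in Lemma~\ref{lem:tail-block-avoidance}. Two finiteness facts make $N$ possible.
\begin{enumerate}
\item[(a)] For each $j$ with $s_\infty\notin R_j$, we have $t^j_\infty\neq s_\infty$. Lemma~\ref{lem:tail-block-avoidance} then shows that $\{n:s_n\in R_j\}$ is finite.
\item[(b)] Since $\operatorname{dom}(\varepsilon_p)$ is finite and $X$ is $T_1$ (Lemma~\ref{lem:US-T1}), the sequence $s$ takes each value in $\operatorname{dom}(\varepsilon_p)\setminus\{s_\infty\}$ only finitely often.
\end{enumerate}
Choose $N$ beyond all these finitely many exceptional indices.

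\textbf{Each late term is harmless.} Fix $n\geq N$ with $s_n\notin C$, and let $C'$ be the $\sim_p$-class of $s_n$. Suppose $C'$ is not a singleton. By Lemma~\ref{lem:finite-component-structure}, $C'$ is a union of blocks $R_j$ with $j\in J_{C'}$. Since $s_\infty\notin C'$, we get $s_\infty\notin R_j$ for each such $j$. By the choice of $N$, $s_n\notin R_j$, which is a contradiction. Hence $C'=\{s_n\}$ is a singleton. By (b), $s_n\notin\operatorname{dom}(\varepsilon_p)$, since $s_n\neq s_\infty$ (otherwise $s_n\in C$). So $C'$ is an uncolored singleton.

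\textbf{The classes of $q$.} The relation $\sim_q$ is generated by $\sim_p$ together with $s_n\sim s_\infty$ for $n\geq N$. Its classes are therefore:
\begin{itemize}
\item the merged class $C\cup\{s_n:n\geq N\}$;
\item all other $\sim_p$-classes, which are unchanged.
\end{itemize}
The merged class adds to $C$ only uncolored points. So its colored points are exactly those of $C$, and they share one color because $p$ is admissible. The unchanged classes inherit admissibility from $p$. Hence $q$ is admissible, so $q\in D_s$ and $q\leq p$, which proves density.

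The main obstacle is the step showing that each late term is harmless, namely identifying the $\sim_p$-class of a late term $s_n$. This is exactly where Lemmas~\ref{lem:finite-component-structure} and~\ref{lem:tail-block-avoidance} are combined. The point to verify carefully is that the blocks $R_j$ containing $s_\infty$ are already inside $C$, so they cause no conflict, while each of the other finitely many blocks is visited by $s$ only finitely often.
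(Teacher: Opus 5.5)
Your proof is correct and follows the paper's own argument. You extend $p$ to $(\Gamma_p\cup\{(s,N)\},\varepsilon_p)$, and you use Lemmas~\ref{lem:tail-block-avoidance} and~\ref{lem:finite-component-structure} together with the $T_1$ property to choose $N$ so that late terms outside the class of $s_\infty$ meet only uncolored classes. The only difference is bookkeeping: you bound visits to every tail block of $\Gamma_p$ that misses $s_\infty$ and to every colored point, which gives uncolored singletons, whereas the paper bounds visits to each colored class other than $C_0$.
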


\begin{proof}
Fix $p\in P_X$, write $s_n\to x$, and let $C_0=[x]_{\sim_p}$.  Only
finitely many $\sim_p$-classes are colored, because each colored class meets
the finite set $\operatorname{dom}(\varepsilon_p)$.

Let $C\neq C_0$ be a colored class.  By
Lemma~\ref{lem:finite-component-structure}, either $C=\{z\}$ is an isolated
singleton, or it is the union of finitely many tail blocks
\[
  R(t^j,N_j)=\{t^j_\infty\}\cup\{t^j_n:n\geq N_j\}
\]
whose centers lie in $C$.  In the singleton case, $z\neq x$, so the sequence
$s$ takes the value $z$ only finitely often.  In the second case, no such
tail block contains $x$, since otherwise its center, and hence the whole
class $C$, would lie in $C_0$.  Lemma~\ref{lem:tail-block-avoidance} now
shows that $s$ enters each block only finitely often.  Thus

$$  \{n:s_n\in C\}$$

\noindent is finite in either case.

Taking the union over the finitely many colored classes other than $C_0$,
choose $N$ such that for every $n\geq N$, the old class
$[s_n]_{\sim_p}$ is either $C_0$ or uncolored.  After adjoining $(s,N)$, the
new edges all join $C_0$ to those old classes.  Hence the only old classes
that are merged are $C_0$ and uncolored classes; every other old component
is unchanged.  No two classes carrying different colors can therefore be
identified.  Thus

$$  q=(\Gamma_p\cup\{(s,N)\},\varepsilon_p)$$

\noindent is admissible, with $q\leq p$ and $q\in D_s$.
\end{proof}

\subsection{The regular-open completion}

We recall a concrete completion of an arbitrary poset. Give a poset $P$ the
lower Alexandrov topology, whose open sets are the lower sets. The regular
open subsets form a complete Boolean algebra $\RO(P)$; see, for example,
\cite{Sikorski-1969}. If $U$ and $V$ are regular open, then their meet is
$U\cap V$, the Boolean complement of $U$ is $\Int(P\setminus U)$, and the
join of a family $(U_i)_i$ is
$\operatorname{ro}(\bigcup_i U_i)$. For $A\subseteq P$, write

$$  \operatorname{ro}(A)=\Int\cl A,$$

\noindent and for $p\in P$ put

$$  \rho(p)=\operatorname{ro}(\downarrow p)\in\RO(P).$$

\noindent The direction of the Alexandrov topology can be checked directly.  The least
open neighbourhood of $u$ is ${\downarrow} u$, and therefore
\begin{equation}\label{eq:rho-characterization}
 ~~~~~~~~~~~~~~~~~~~~~~~~~~~~~~ \rho(p)=\{u\in P:\text{every }v\leq u\text{ is compatible with }p\}.
\end{equation}
Indeed, $u\in\cl({\downarrow} p)$ precisely when $u$ and $p$ have a common
lower bound, and taking the interior requires the same condition for every
$v\leq u$.  Formula~\eqref{eq:rho-characterization} also shows explicitly
that no separativity assumption on $P$ is being made.
In the following lemma, compatibility means the existence of a common lower
bound. A subset $D$ is \emph{dense below $p$} if, for every $r\leq p$,
there is $q\in D$ with $q\leq r$.

\begin{lemma}\label{lem:RO-completion}
Let $P$ be a poset and $\B=\RO(P)$.  Then:
\begin{enumerate}[\rm (i)]
\item $\rho(p)\neq0$ for every $p\in P$, and $q\leq p$ implies
      $\rho(q)\leq\rho(p)$;
\item $p$ and $q$ are compatible if and only if
      $\rho(p)\wedge\rho(q)\neq0$;
\item $\{\rho(p):p\in P\}$ is order-dense in $\B$;
\item if $D\subseteq P$ is dense, then
      $\bigvee_{q\in D}\rho(q)=1$;
\item if $D$ is dense below $p$, then $\rho(p)=\bigvee\{\rho(q):q\leq p,\ q\in D\}.$

\end{enumerate}
\end{lemma}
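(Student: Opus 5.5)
Throughout the argument I will work directly with the concrete description \eqref{eq:rho-characterization} of $\rho(p)$ and with the elementary facts about the lower Alexandrov topology on $P$. Here the open sets are the lower sets, $\cl A={\uparrow}\{u:\ u \text{ is compatible with some } a\in A\}$ (more precisely, $u\in\cl A$ iff ${\downarrow}u$ meets ${\downarrow}A$), and $\Int S=\{u:{\downarrow}u\subseteq S\}$. A nonzero element of $\B$ is a nonempty regular open set.

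For (i), note that $p\in\rho(p)$, since every $v\le p$ is compatible with $p$ (take $v$ itself as the common lower bound). Hence $\rho(p)\ne\varnothing$. Monotonicity is immediate from \eqref{eq:rho-characterization}: compatibility with $q$ implies compatibility with $p$ when $q\le p$.

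For (ii), suppose $r$ is a common lower bound of $p$ and $q$. Then $r\in\rho(r)\subseteq\rho(p)\cap\rho(q)$ by (i), so the meet is nonzero. Conversely, suppose $u\in\rho(p)\cap\rho(q)$. Then $u$ is compatible with $p$, say via $v\le u,p$. Since $v\le u$ and $u\in\rho(q)$, the element $v$ is compatible with $q$, say via $w\le v,q$. Then $w\le p,q$.

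For (iii), let $U\in\B$ be nonempty and choose $u\in U$. Because $U$ is a lower set, ${\downarrow}u\subseteq U$, so $\cl({\downarrow}u)\subseteq\cl U$. Taking interiors gives $\rho(u)\subseteq\Int\cl U=U$, using regularity of $U$. Since $\rho(u)\ne 0$, this is order-density.

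For (iv) and (v) it suffices to prove (v), because (iv) is the special case obtained by adding a top element, or equivalently by the same argument with $1=P$. The inclusion $\supseteq$ follows from (i). For $\subseteq$, put $W=\bigvee\{\rho(q):q\le p,\ q\in D\}=\operatorname{ro}\bigl(\bigcup\rho(q)\bigr)$. Suppose $\rho(p)\not\le W$. Then, in the Boolean algebra, $\rho(p)\wedge\neg W\ne0$, and by (iii) there is $r$ with $\rho(r)\le\rho(p)\wedge\neg W$. Since $r\in\rho(p)$, $r$ is compatible with $p$ via some $r'\le r,p$, and $\rho(r')\le\rho(r)$. Density below $p$ then gives $q\in D$ with $q\le r'\le p$. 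Thus $\rho(q)\le W$ and $\rho(q)\le\rho(r)\le\neg W$, so $\rho(q)=0$, which contradicts (i). For (iv) the argument is the same with $\rho(p)$ replaced by $1$.

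The main obstacle is minor: it is being careful with the description of closure and interior in the lower Alexandrov topology, in particular checking that $\neg W=\Int(P\setminus W)$ behaves as the Boolean complement. This is standard and is cited from Sikorski.
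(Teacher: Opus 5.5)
Your proof is correct and follows the paper's argument essentially step for step. The only organizational difference is that you prove (v) and get (iv) by running the same contradiction with $1=P$ in place of $\rho(p)$, whereas the paper proves (iv) first and then repeats the argument for (v). One small slip in your preamble: in the lower Alexandrov topology $\cl A={\uparrow}A$, so your criterion ``$u\in\cl A$ iff ${\downarrow}u$ meets ${\downarrow}A$'' holds only for lower sets $A$. Those are the only sets you apply it to, so nothing is affected.
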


\begin{proof}
The principal lower set ${\downarrow} p$ is a nonempty open set contained in
$\rho(p)$, proving the first assertion; monotonicity is immediate.

If $r\leq p,q$, then
${\downarrow} r\subseteq\rho(p)\cap\rho(q)$, so the meet is nonzero.
Conversely, take $u\in\rho(p)\cap\rho(q)$.  Since
$u\in\cl({\downarrow} p)$, the least open neighbourhood ${\downarrow} u$ meets
${\downarrow} p$; choose $v\leq u,p$.  The set $\rho(q)$ is lower, so
$v\in\rho(q)\subseteq\cl({\downarrow} q)$.  Hence ${\downarrow} v$ meets
${\downarrow} q$, and some $r$ satisfies $r\leq v,q$.  Then $r\leq p,q$.
This proves (ii).

For (iii), let $0\neq U\in\B$ and choose $p\in U$.  Since $U$ is lower,
$\downarrow p\subseteq U$, and regularity gives

$$  \rho(p)=\Int\cl(\downarrow p)\subseteq\Int\cl U=U.$$

Let $D$ be dense and set $b=\bigvee_{q\in D}\rho(q)$.  If $b\neq1$, use
(iii) to choose $p$ with $0\neq\rho(p)\leq\neg b$.  Take $q\leq p$ in $D$.
Then $0\neq\rho(q)\leq\rho(p)\leq\neg b$, while $\rho(q)\leq b$, a
contradiction.  This proves (iv).

For (v), let the right-hand side be $c\leq\rho(p)$.  If
$c<\rho(p)$, choose $r$ with
$0\neq\rho(r)\leq\rho(p)\wedge\neg c$.  By (ii), $r$ and $p$ have a
common lower bound $r'$.  Density below $p$ gives $q\in D$ with $q\leq r'$.
Then $0\neq\rho(q)\leq\rho(r)\leq\neg c$, although $\rho(q)\leq c$ by the
definition of $c$.  This contradiction proves (v).
\end{proof}

\subsection{Boolean values attached to points}

Apply Lemma~\ref{lem:RO-completion} to $P_X$ and put

$$  \B_X=\RO(P_X).$$

\noindent For $x\in X$ and $i\in\{0,1\}$, let

$$  P(x,i)=\{p\in P_X:[x]_{\sim_p}\text{ is colored }i\}$$

\noindent and define

$$  b_x^i=\bigvee_{p\in P(x,i)}\rho(p)\in\B_X.$$

\begin{lemma}\label{lem:boolean-colors}
For every $x\in X$,

 $$ b_x^0\wedge b_x^1=0,
  ~
  b_x^0\vee b_x^1=1.$$

\noindent Consequently, if $b_x=b_x^1$, then $\neg b_x=b_x^0$.
\end{lemma}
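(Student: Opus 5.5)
For the meet, I will use the distributive identity from Section~\ref{sec:prelim} twice:
$$b_x^0\wedge b_x^1=\bigvee_{p\in P(x,0),\,q\in P(x,1)}\rho(p)\wedge\rho(q).$$
It therefore suffices to show that $\rho(p)\wedge\rho(q)=0$ whenever $p\in P(x,0)$ and $q\in P(x,1)$. By Lemma~\ref{lem:RO-completion}(ii), this amounts to showing that $p$ and $q$ are incompatible.

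Suppose some $r\in P_X$ satisfies $r\leq p,q$. Then $\Gamma_r\supseteq\Gamma_p$, so $\sim_p\subseteq\sim_r$, and likewise $\sim_q\subseteq\sim_r$. Also $\varepsilon_r$ extends both $\varepsilon_p$ and $\varepsilon_q$. Since $[x]_{\sim_p}$ is colored $0$, there is $u\sim_p x$ with $\varepsilon_p(u)=0$. Similarly there is $v\sim_q x$ with $\varepsilon_q(v)=1$. Then $u\sim_r x\sim_r v$, with $\varepsilon_r(u)=0$ and $\varepsilon_r(v)=1$, which contradicts admissibility~\eqref{eq:admissible} of $r$. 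Hence the meet is $0$.

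For the join, I will show that $D=P(x,0)\cup P(x,1)$ is dense in $P_X$ and then apply Lemma~\ref{lem:RO-completion}(iv):
$$1=\bigvee_{q\in D}\rho(q)=b_x^0\vee b_x^1.$$
Given $p$, if $[x]_{\sim_p}$ is already colored then $p\in D$. Otherwise, set $q=(\Gamma_p,\varepsilon_p\cup\{(x,0)\})$. This $q$ is a finite partial map, since $x\notin\operatorname{dom}(\varepsilon_p)$ (the class of $x$ is uncolored). The relation $\sim_q$ equals $\sim_p$, and the only class that gains a colored point is $[x]_{\sim_p}$, which contained no colored point before. So $q$ is admissible, $q\leq p$, and $q\in P(x,0)$.

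The final clause follows from uniqueness of complements in a Boolean algebra. The only point needing care is the incompatibility step, namely that refining a condition only coarsens $\sim$ and extends $\varepsilon$; the rest is routine.
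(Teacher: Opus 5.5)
Your proof is correct and follows the paper's argument. For the join you use density of $P(x,0)\cup P(x,1)$ with Lemma~\ref{lem:RO-completion}(iv); for the meet you use incompatibility of conditions from $P(x,0)$ and $P(x,1)$, Lemma~\ref{lem:RO-completion}(ii), and distributivity. The only difference is that you spell out the admissibility checks, which the paper states in one line each.
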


\begin{proof}
Every condition has an extension deciding the color of $x$.  If the class of
$x$ is uncolored, assign either color to $x$; admissibility is preserved.
Thus $P(x,0)\cup P(x,1)$ is dense, and
Lemma~\ref{lem:RO-completion}(iv) gives the join equality.

A condition in $P(x,0)$ and a condition in $P(x,1)$ have no common lower
bound, since a common extension would assign two colors to one equivalence
class. Hence Lemma~\ref{lem:RO-completion}(ii) gives
$\rho(p)\wedge\rho(q)=0$ whenever $p\in P(x,0)$ and $q\in P(x,1)$.
Distributivity of finite meets over arbitrary joins in $\B_X$ now yields

$$  b_x^0\wedge b_x^1
  =\bigvee_{p\in P(x,0),\ q\in P(x,1)}
     (\rho(p)\wedge\rho(q))=0.$$
\end{proof}

\begin{lemma}\label{lem:point-separation}
If $x\neq y$ in $X$, then $b_x\neq b_y$.
\end{lemma}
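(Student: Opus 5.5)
We need to show $b_x\neq b_y$ for $x\neq y$. Our plan is to find a condition $p$ with $\rho(p)\le b_x\wedge\neg b_y$. Then $\rho(p)\ne 0$ by Lemma~\ref{lem:RO-completion}(i) forces $b_x\wedge\neg b_y\neq0$, hence $b_x\neq b_y$. Concretely, take
$$p=(\varnothing,\varepsilon),\qquad \varepsilon(x)=1,\ \varepsilon(y)=0.$$
With $\Gamma_p=\varnothing$ the relation $\sim_p$ is equality. Because $x\neq y$, admissibility \eqref{eq:admissible} holds trivially. By definition $p\in P(x,1)$, so $\rho(p)\le b_x^1=b_x$. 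Likewise $p\in P(y,0)$, so $\rho(p)\le b_y^0=\neg b_y$ by Lemma~\ref{lem:boolean-colors}.

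The key point is that colors, once assigned, persist under extension. If $q\le p$, then $\Gamma_q\supseteq\Gamma_p$, so $\sim_q$ refines to a coarser relation, and $\varepsilon_q\supseteq\varepsilon_p$. Hence $[x]_{\sim_q}$ still contains $x\in\operatorname{dom}(\varepsilon_q)$ with color $1$. This color is well defined by admissibility, so $q\in P(x,1)$ and similarly $q\in P(y,0)$.

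The inequalities $\rho(p)\le b_x$ and $\rho(p)\le\neg b_y$ only need $p$ itself to lie in $P(x,1)$ and $P(y,0)$, since $b_x^1$ is a join containing $\rho(p)$. The persistence observation is therefore not strictly required, but it confirms that the construction is consistent.

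Combining these gives
$$0\neq\rho(p)\le b_x\wedge\neg b_y.$$
If $b_x=b_y$, the right-hand side would equal $b_x\wedge\neg b_x=0$, which is a contradiction. Hence $b_x\neq b_y$. We expect no real obstacle here. The only point requiring care is checking that the two-point coloring is admissible, and this uses $x\neq y$ together with $\Gamma_p=\varnothing$. Note that no appeal to the US property is needed at this stage.
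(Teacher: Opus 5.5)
Your proof is correct and is exactly the paper's argument: the condition with $\Gamma_p=\varnothing$ and $\varepsilon(x)=1$, $\varepsilon(y)=0$ is admissible because $x\neq y$, and it gives $0<\rho(p)\le b_x\wedge\neg b_y$. As you yourself note, the paragraph on persistence of colours under extension is not needed.
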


\begin{proof}
Take the condition with no tail constraints and with the finite coloring
$\varepsilon(x)=1$, $\varepsilon(y)=0$.  It is admissible and satisfies

$$  0<\rho(p)\leq b_x\wedge\neg b_y.$$
\end{proof}

For $x,y\in X$, set

$$  E(x,y)=(b_x\wedge b_y)\vee(\neg b_x\wedge\neg b_y).$$

\begin{lemma}\label{lem:local-equality}
If $x\sim_p y$, then $\rho(p)\leq E(x,y)$.
\end{lemma}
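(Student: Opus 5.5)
The idea is to use the order-density of $\{\rho(r)\}$ together with the fact that every extension of $p$ keeps $x$ and $y$ in one class and therefore decides both points with the same color. We argue by contradiction. Suppose $\rho(p)\not\leq E(x,y)$. Then $\rho(p)\wedge\neg E(x,y)\neq0$. By Lemma~\ref{lem:boolean-colors}, $\neg b_x=b_x^0$ and $\neg b_y=b_y^0$, so
$$\neg E(x,y)=(b_x^1\wedge b_y^0)\vee(b_x^0\wedge b_y^1).$$
Consequently one of the two disjuncts, say $b_x^1\wedge b_y^0$, has nonzero meet with $\rho(p)$. The other case is symmetric.

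By Lemma~\ref{lem:RO-completion}(iii), we choose $r$ with $0\neq\rho(r)\leq\rho(p)\wedge b_x^1\wedge b_y^0$. By Lemma~\ref{lem:RO-completion}(ii), $r$ and $p$ have a common lower bound $r'$, and $\rho(r')\leq\rho(r)$ by (i). Now expand $b_x^1=\bigvee_{q\in P(x,1)}\rho(q)$. By distributivity of finite meets over joins,
$$0\neq\rho(r')=\rho(r')\wedge b_x^1=\bigvee_{q\in P(x,1)}\bigl(\rho(r')\wedge\rho(q)\bigr).$$
Hence some $q\in P(x,1)$ satisfies $\rho(r')\wedge\rho(q)\neq0$, and by (ii) there is $r''\leq r',q$. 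Since $\rho(r'')\leq\rho(r')\leq b_y^0$, the same argument applied to $b_y^0$ produces $q'\in P(y,0)$ and $s\leq r'',q'$.

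Because $s\leq q$, we have $\Gamma_s\supseteq\Gamma_q$ and $\varepsilon_s\supseteq\varepsilon_q$. The class $[x]_{\sim_q}$ is contained in $[x]_{\sim_s}$, since $\sim_s$ is generated by more edges, and it meets $\operatorname{dom}(\varepsilon_q)$ at a point of color $1$. So $[x]_{\sim_s}$ contains a point colored $1$ by $\varepsilon_s$. In the same way $[y]_{\sim_s}$ contains a point colored $0$. Finally, $s\leq p$ gives $\Gamma_s\supseteq\Gamma_p$, so $x\sim_p y$ implies $x\sim_s y$. Thus a single $\sim_s$-class contains points of colors $1$ and $0$, contradicting admissibility~\eqref{eq:admissible} of $s$.

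The main point requiring care is the monotonicity used here: colors and equivalences persist under extension. This holds because extension only enlarges $\Gamma$ and $\varepsilon$, so $P(x,i)$ is downward closed and $\sim$ only coarsens. The remaining steps are routine Boolean manipulations.
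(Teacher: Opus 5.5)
Your proof is correct, but it is organized differently from the paper's. The paper argues positively. The set $D$ of conditions $q\leq p$ that color the class of $x$ is dense below $p$, because an uncolored class can be colored without changing $\Gamma$. Since $x\sim_q y$, each such $q$ lies in $P(x,i)\cap P(y,i)$ for a single $i$, so $\rho(q)\leq b_x\wedge b_y$ or $\rho(q)\leq\neg b_x\wedge\neg b_y$. Lemma~\ref{lem:RO-completion}(v) then writes $\rho(p)$ as the join of these $\rho(q)$.

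You instead show by contradiction that $\rho(p)\wedge\neg E(x,y)=0$. Using (iii), (ii) and distributivity, you pull a nonzero part of this meet back to one condition $s\leq p$ lying below witnesses $q\in P(x,1)$ and $q'\in P(y,0)$, and $s$ then violates admissibility. Your route uses only (i)--(iii) and never constructs an extension that adds a color, so it bypasses both (v) and the density-below-$p$ step. The price is a case split and three successive compatibility extractions; the paper's density argument is shorter and handles both colors at once. In forcing terms, the paper exhibits a dense set below $p$ of conditions deciding the color of $x$, each of which forces $b_x=b_y$. You show instead that no extension of $p$ is compatible with conditions coloring $x$ and $y$ differently.
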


\begin{proof}
Let

$$  D=\{q\leq p:q\in P(x,0)\cup P(x,1)\}.$$

\noindent The set $D$ is dense below $p$: if $r\leq p$ does not yet color the
$\sim_r$-class of $x$, one may color that class without changing
$\Gamma_r$. Since $x\sim_p y$ and extensions preserve all relations already
present in $p$, every $q\in D$ colors $x$ and $y$ alike. Consequently,

$$  \rho(q)\leq b_x\wedge b_y
  \quad\text{or}\quad
  \rho(q)\leq\neg b_x\wedge\neg b_y.$$

\noindent Lemma~\ref{lem:RO-completion}(v) therefore gives

$$  \rho(p)=\bigvee_{q\in D}\rho(q)\leq E(x,y).$$
\end{proof}

\begin{proposition}\label{prop:sequence-to-order}
If $s_n\to s_\infty$ in $X$, then

$$  \bigvee_{N\in\N}\bigwedge_{n\geq N}E(s_n,s_\infty)=1.$$

\noindent Equivalently, with $b_n=b_{s_n}$ and $b=b_{s_\infty}$,

$$  \bigwedge_{N\in\N}\bigvee_{n\geq N}(b_n\mathbin{\triangle}b)=0,$$

\noindent where $\triangle$ denotes Boolean symmetric difference.
\end{proposition}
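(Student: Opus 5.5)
The argument combines the tail-density lemma with Lemma~\ref{lem:local-equality} and Lemma~\ref{lem:RO-completion}(iv). Write $s$ for the sequence and $x=s_\infty$. For each $N$ put $e_N=\bigwedge_{n\geq N}E(s_n,x)$. The aim is to show that $\bigvee_N e_N$ dominates $\rho(p)$ for every $p$ in the dense set $D_s$ of Lemma~\ref{lem:tail-density}.

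\textbf{Step 1: one condition controls a whole tail.} Let $p\in D_s$, so $(s,N)\in\Gamma_p$ for some $N$. By the definition of $\sim_p$, every $n\geq N$ satisfies $s_n\sim_p s_\infty$. Lemma~\ref{lem:local-equality} then gives $\rho(p)\leq E(s_n,s_\infty)$ for every $n\geq N$. Hence $\rho(p)\leq e_N\leq\bigvee_M e_M$. The point is that one fixed $p$ bounds all tail terms simultaneously, so the infinite meet causes no difficulty.

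\textbf{Step 2: pass to the join.} Step 1 gives
\[
\bigvee_{p\in D_s}\rho(p)\leq\bigvee_{N}\bigwedge_{n\geq N}E(s_n,s_\infty).
\]
Since $D_s$ is dense by Lemma~\ref{lem:tail-density}, Lemma~\ref{lem:RO-completion}(iv) says the left-hand side equals $1$. This proves the first identity.

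\textbf{Step 3: the equivalent form.} In any Boolean algebra, $E(x,y)=\neg(b_x\mathbin{\triangle}b_y)$ by a direct computation. Complete Boolean algebras satisfy the infinite De Morgan laws, so
\[
\neg\bigvee_N\bigwedge_{n\geq N}E(s_n,s_\infty)=\bigwedge_N\bigvee_{n\geq N}(b_n\mathbin{\triangle}b).
\]
The first identity therefore holds exactly when the second does.

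\textbf{Where the difficulty lies.} None of these steps is a serious obstacle. The substantive combinatorial work, namely that an admissible tail constraint $(s,N)$ can always be added, was already done in Lemma~\ref{lem:tail-density}. The only point needing care is Step 1. There we use that Lemma~\ref{lem:local-equality} applies to every pair $(s_n,s_\infty)$ with $n\geq N$ for the same condition $p$, which holds because the relation $\sim_p$ contains the entire tail.
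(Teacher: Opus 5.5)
Your proposal is correct and follows the paper's proof: for $p\in D_s$ you use Lemma~\ref{lem:local-equality} along the whole tail to get $\rho(p)\leq\bigwedge_{n\geq N}E(s_n,s_\infty)$, then combine the density of $D_s$ with Lemma~\ref{lem:RO-completion}(iv). You also obtain the second form by complementation, using $E(x,y)=\neg(b_x\mathbin{\triangle}b_y)$ and De Morgan, exactly as the paper does.
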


\begin{proof}
For $p\in D_s$, choose $N$ with $(s,N)\in\Gamma_p$.  Then
$s_n\sim_p s_\infty$ for every $n\geq N$, and
Lemma~\ref{lem:local-equality} yields

$$  \rho(p)\leq\bigwedge_{n\geq N}E(s_n,s_\infty).$$

\noindent The density of $D_s$, Lemma~\ref{lem:tail-density}, and
Lemma~\ref{lem:RO-completion}(iv) prove the first equality.  The second is
its Boolean complement.
\end{proof}

We record the order-convergence calculation in full, since it is the step
which turns finite tail constraints into Scott convergence.

\begin{lemma}\label{lem:order-convergence}
Let $b_n,b$ belong to a complete Boolean algebra and suppose that

$$  \bigwedge_N\bigvee_{n\geq N}(b_n\mathbin{\triangle}b)=0.$$

\noindent Put

$$  a_N=\bigwedge_{n\geq N}b_n,
  ~
  c_N=\bigwedge_{n\geq N}\neg b_n.$$

\noindent Then

$$  \bigvee_N a_N=b,
  ~
  \bigvee_N c_N=\neg b.$$
\end{lemma}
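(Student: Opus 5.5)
Writing $d_n=b_n\mathbin{\triangle}b$, the hypothesis says that the tail joins $e_N=\bigvee_{n\geq N}d_n$ form a decreasing sequence with meet $0$. The plan is to prove $\bigvee_N a_N=b$ directly. The statement for $c_N$ then follows by the same argument applied to $\neg b_n$ and $\neg b$, since $\neg b_n\mathbin{\triangle}\neg b=b_n\mathbin{\triangle}b$.

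\emph{Lower bound from each tail.} For $n\geq N$ we have $b\wedge\neg d_n\leq b_n$. Indeed, outside the symmetric difference $b_n$ and $b$ agree, so $b\wedge\neg(b_n\mathbin{\triangle}b)=b\wedge b_n$. Taking the meet over $n\geq N$ and using De Morgan, $\bigwedge_{n\geq N}\neg d_n=\neg e_N$, we get
\[
  b\wedge\neg e_N\leq a_N.
\]
Taking joins over $N$ and using the identity $a\wedge\bigvee S=\bigvee_{s\in S}(a\wedge s)$ from Section~\ref{sec:prelim}, we obtain
\[
  b\wedge\bigvee_N\neg e_N\leq\bigvee_N a_N .
\]
By De Morgan, $\bigvee_N\neg e_N=\neg\bigwedge_N e_N=\neg 0=1$, so $b\leq\bigvee_N a_N$.

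\emph{Upper bound.} For the reverse inequality it suffices to show $a_N\wedge\neg b=0$ for each $N$, and then distribute again. For every $n\geq N$ we have $a_N\wedge\neg b\leq b_n\wedge\neg b\leq d_n$. Given $M\geq N$, each $n\geq M$ also satisfies $n\geq N$, so $a_N\wedge\neg b\leq e_M$ for all $M\geq N$. Since the $e_M$ decrease, $\bigwedge_{M\geq N}e_M=\bigwedge_M e_M=0$, and hence $a_N\wedge\neg b=0$. Distributivity then gives $\bigl(\bigvee_N a_N\bigr)\wedge\neg b=0$, that is, $\bigvee_N a_N\leq b$.

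The only point requiring care is using infinite distributivity in the correct direction. Only $a\wedge\bigvee S=\bigvee(a\wedge s)$ is available, together with infinite De Morgan laws, which hold in any complete Boolean algebra. The argument above uses exactly these and avoids the dual distributive law, so I expect no real obstacle.
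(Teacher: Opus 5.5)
Your proof is correct and follows essentially the same route as the paper. Your tail joins $e_N$ are the paper's $r_N$: for the lower bound you show $b\wedge\neg e_N\leq a_N$ and $\bigvee_N\neg e_N=1$, and for the upper bound you show that $a_N\wedge\neg b$ lies below every tail join and hence is $0$. Your reduction of the $c_N$ case via $\neg b_n\mathbin{\triangle}\neg b=b_n\mathbin{\triangle}b$ is just a tidier form of the paper's remark that the same argument works ``with complements interchanged.''
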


\begin{proof}
Let
$r_N=\bigvee_{n\geq N}(b_n\mathbin{\triangle}b)$.  Then
$r_N$ decreases and $\bigwedge_Nr_N=0$.  For $n\geq N$, the elements
$b_n$ and $b$ agree on $\neg r_N$.  Hence

$$  b\wedge\neg r_N\leq a_N,
  \qquad
  \neg b\wedge\neg r_N\leq c_N.$$

\noindent Since $\bigvee_N\neg r_N=1$, it follows that
$b\leq\bigvee_Na_N$ and $\neg b\leq\bigvee_Nc_N$.

For the reverse inequality, fix $N$.  For every $n\geq N$,

$$  a_N\wedge\neg b\leq b_n\wedge\neg b
                    \leq b_n\mathbin{\triangle}b.$$

\noindent Thus $a_N\wedge\neg b\leq\bigwedge_{n\geq N}(b_n\mathbin{\triangle}b)$.
For any $M$, choose $k\geq\max\{M,N\}$.  Then

$$  \bigwedge_{n\geq N}(b_n\mathbin{\triangle}b)
     \leq b_k\mathbin{\triangle}b\leq r_M.$$

\noindent Taking the meet over $M$ gives $a_N\wedge\neg b=0$, so $a_N\leq b$.
The same argument, with complements interchanged, gives $c_N\leq\neg b$.
Taking joins over $N$ completes the proof.
\end{proof}

\begin{theorem}[Boolean realization theorem]\label{thm:realization}
Let $X$ be a US-space.  There are a complete Boolean algebra $\B_X$ and an
injective map

$$  j:X\longrightarrow G_{\B_X},
  ~ j(x)=(b_x,\neg b_x),$$

\noindent which preserves convergent sequences.  If $X$ is sequential, then $j$ is
continuous.  If $X$ is both sequential and compact, then $j$ is a
topological embedding.
\end{theorem}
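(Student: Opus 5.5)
The plan has three parts: injectivity, convergence preservation and continuity, and the embedding.

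Injectivity is immediate from Lemma~\ref{lem:point-separation}. By Lemma~\ref{lem:boolean-colors}, $j(x)=(b_x,\neg b_x)$ indeed lies in $G_{\B_X}$.

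For preservation of convergent sequences, let $s_n\to s_\infty$ and write $b_n=b_{s_n}$, $b=b_{s_\infty}$. Proposition~\ref{prop:sequence-to-order} gives $\bigwedge_N\bigvee_{n\geq N}(b_n\mathbin{\triangle}b)=0$. Lemma~\ref{lem:order-convergence} then yields the increasing chains $a_N=\bigwedge_{n\geq N}b_n$ and $c_N=\bigwedge_{n\geq N}\neg b_n$, with $\bigvee_N a_N=b$ and $\bigvee_N c_N=\neg b$. Hence $(a_N,c_N)_N$ is a directed subset of $\B_X^2$ whose supremum is $j(s_\infty)$. Let $U$ be a Scott open set containing $j(s_\infty)$. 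By Scott openness some $(a_N,c_N)\in U$. Since $U$ is an upper set and $(a_N,c_N)\leq(b_n,\neg b_n)$ for all $n\geq N$, every $j(s_n)$ with $n\geq N$ lies in $U$. Thus $j(s_n)\to j(s_\infty)$ in $\Sigma\,\B_X^2$, and therefore also in the subspace $G_{\B_X}$.

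Continuity for sequential $X$ follows. Let $F$ be closed in $G_{\B_X}$ and set $A=j^{-1}(F)$. If $(a_n)$ is a sequence in $A$ with $a_n\to x$, then $j(a_n)\to j(x)$. Closed sets contain all limits of sequences lying in them, so $j(x)\in F$ and $x\in A$. Since $X$ is sequential, $A$ is closed.

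For the embedding when $X$ is also compact, it suffices to show that $j$ is a closed map onto its image. Let $C\subseteq X$ be closed. Then $C$ is compact, so $j(C)$ is a compact subset of $G_{\B_X}$ by continuity. Proposition~\ref{prop:graph-kc} says $G_{\B_X}$ is a KC-space, so $j(C)$ is closed in $G_{\B_X}$ and hence in $j(X)$. Together with injectivity and continuity, this makes $j$ a homeomorphism onto its image. This is the usual compact-to-Hausdorff argument, with KC playing the role of Hausdorffness.

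I expect the main obstacle to be the convergence step. There one must check that the order convergence supplied by Lemma~\ref{lem:order-convergence} really gives Scott convergence in the product. The point is that the lower bounds $(a_N,c_N)$ are taken coordinatewise and form a chain whose supremum is computed coordinatewise in $\B_X^2$. Once that is secured, the rest is routine.
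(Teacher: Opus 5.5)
Your proposal is correct and follows essentially the same route as the paper. Injectivity comes from Lemma~\ref{lem:point-separation}, and Scott convergence comes from the directed approximants $(a_N,c_N)$ supplied by Proposition~\ref{prop:sequence-to-order} and Lemma~\ref{lem:order-convergence}; continuity follows from sequentiality, and $j$ is closed by the \emph{KC} property of $G_{\B_X}$ (Proposition~\ref{prop:graph-kc}). The only cosmetic difference is that you check continuity through closed sets and limits of sequences, whereas the paper phrases it through sequentially open preimages.
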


\begin{proof}
Injectivity follows from Lemma~\ref{lem:point-separation}.  Suppose that
$s_n\to s_\infty$, and write $b_n=b_{s_n}$ and $b=b_{s_\infty}$.  Define
$a_N,c_N$ as in Lemma~\ref{lem:order-convergence} and put
$d_N=(a_N,c_N)\in\B_X^2$.  By Proposition~\ref{prop:sequence-to-order} and
Lemma~\ref{lem:order-convergence},

$$  d_N\uparrow(b,\neg b)=j(s_\infty).$$

\noindent Moreover, $d_N\leq(b_n,\neg b_n)=j(s_n)$ whenever $n\geq N$.

Let $O$ be a neighbourhood of $j(s_\infty)$ in the relative Scott topology
on $G_{\B_X}$.  Write $O=U\cap G_{\B_X}$ with $U$ Scott open in
$\B_X^2$.  Since $\bigvee_Nd_N=j(s_\infty)\in U$, some $d_N$ belongs to
$U$.  The upperness of $U$ then gives $j(s_n)\in U$ for all $n\geq N$.
The approximants $d_N$ need not themselves lie in the complement graph:
they are used only as a directed family in the ambient Scott space
$\Sigma(\B_X^2)$.  Hence $j(s_n)\to j(s_\infty)$ in the relative Scott
topology.

If $X$ is sequential, a map preserving convergent sequences is continuous:
the inverse image of every open set is sequentially open and therefore open.
Assume in addition that $X$ is compact.  By
Proposition~\ref{prop:graph-kc}, $G_{\B_X}$ is \emph{KC}.  For every closed
$F\subseteq X$, the set $F$ is compact, so $j(F)$ is compact and therefore
closed in $G_{\B_X}$.  Thus the continuous bijection
$j:X\to j(X)$ is closed and hence a homeomorphism.
\end{proof}

\section{The counterexample}\label{sec:main}

Van Douwen constructed a topology on $\N$ which is compact,
Fr\'echet and anti-Hausdorff, while convergent sequences have unique
limits \cite[pp.~149--151]{Douwen-1993}.  The same package of properties is
restated explicitly in \cite[Theorem~5.2]{Xu-Xu-Ji-Qiu-2026}.  We use the
following consequence.

\begin{theorem} (Douwen Theorem \cite{Douwen-1993})\label{thm:van-douwen}
There exists a non-singleton countable compact Fr\'echet US-space
$X$ which is anti-Hausdorff.
\end{theorem}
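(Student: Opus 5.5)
The plan is to take this statement directly from the literature rather than reprove it. The task then reduces to checking that van Douwen's example, as described in \cite[pp.~149--151]{Douwen-1993} and restated in \cite[Theorem~5.2]{Xu-Xu-Ji-Qiu-2026}, has each property with the meaning fixed in Section~\ref{sec:prelim}. The space is $\N$ with a topology $\tau$ finer than the cofinite topology. It is countable and infinite, so it is certainly not a singleton. For each remaining property I compare the cited definition with ours.

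\emph{Compactness.} This is in the sense of every open cover having a finite subcover, with no separation assumed, which is exactly the convention adopted in Section~\ref{sec:prelim}.

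\emph{US.} Van Douwen's ``convergent sequences have unique limits'' is verbatim our \emph{US} condition.

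\emph{Fr\'echet.} His notion of Fr\'echet coincides with the sequence-closure definition given here.

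\emph{Anti-Hausdorff.} This means that any two nonempty open sets meet. That is precisely our open-set formulation of irreducibility of the whole space.

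The only point needing a remark is that non-singletonness is not vacuous. A \emph{US}-space is $T_1$ by Lemma~\ref{lem:US-T1}. A finite $T_1$ space is discrete, and a discrete anti-Hausdorff space is a singleton. So any example with these properties must be infinite, and van Douwen's space on $\N$ is.

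If a self-contained argument were wanted, I would follow van Douwen's strategy. The cofinite topology on $\N$ is already compact, $T_1$ and anti-Hausdorff. It fails the \emph{US} condition badly, since every injective sequence converges to every point. The repair is to refine it along a transfinite recursion of length $\mathfrak c$ that enumerates all injective sequences. At each stage the current sequence is either assigned a single limit point, by shrinking neighbourhoods of the other points so that they eventually miss a tail of it, or it is made non-convergent altogether. At every stage the new open sets are kept ``large'', that is, co-small with respect to a suitable ideal. This keeps any two nonempty open sets intersecting and preserves compactness. The Fr\'echet property is secured by ensuring that whenever $x\in\cl A$, some sequence from $A$ has been assigned the limit $x$.

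I expect the main obstacle to be balancing these requirements. Killing spurious limits pushes the topology to be finer, while compactness and anti-Hausdorffness push it to stay coarse. Van Douwen handles this by using an almost-disjointness bookkeeping of the sequence ranges, much as in Lemma~\ref{lem:finite-overlap}. For the purposes of this paper I would not redo that bookkeeping and would rely on the citation.
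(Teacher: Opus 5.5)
Your proposal is correct and takes the same approach as the paper: the paper gives no independent proof either and imports the result from van Douwen \cite[pp.~149--151]{Douwen-1993}, as restated in \cite[Theorem~5.2]{Xu-Xu-Ji-Qiu-2026}. Your definitional cross-checks (compactness without separation, \emph{US}, Fr\'echet, and anti-Hausdorff as irreducibility) and your remark that a finite $T_1$ anti-Hausdorff space is a singleton are accurate, while the recursive sketch in your last two paragraphs is only a heuristic outline that the argument does not depend on.
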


\begin{theorem}\label{thm:main}
There exists a complete Boolean algebra $C$ whose Scott space is not
co-sober.
\end{theorem}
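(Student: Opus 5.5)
The plan is to combine the three main ingredients already established: Theorem~\ref{thm:van-douwen} supplies the space, Theorem~\ref{thm:realization} embeds it into a complement graph, and Theorem~\ref{thm:reduction} turns that embedded copy into a non-principal $k$-irreducible compact saturated set. Let $X$ be the space of Theorem~\ref{thm:van-douwen}. Since $X$ is Fr\'echet, it is sequential. It is also compact and \emph{US}. Theorem~\ref{thm:realization} therefore gives a complete Boolean algebra $B=\B_X$ and a topological embedding
$$j:X\longrightarrow G_B,$$
where $G_B$ carries the subspace topology from $\Sigma~\!\!B^2$.

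Next I would set $M=j(X)\subseteq G_B$. Because $j$ is a homeomorphism onto $M$, the subspace $M$ inherits all the relevant properties of $X$. It is compact. It is irreducible, since anti-Hausdorff means every two nonempty open sets meet, and this transfers along the homeomorphism. It is non-singleton, because $X$ is non-singleton and $j$ is injective. Theorem~\ref{thm:reduction} now applies and shows that $\Sigma~\!\!B^2$ is not co-sober.

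The final step is to pass from $B^2$ to a single complete Boolean algebra. I would take $C=B\times B$ with coordinatewise operations. This is again a complete Boolean algebra, since products of complete Boolean algebras are complete Boolean algebras. Its order is the product order, so its Scott space is exactly $\Sigma~\!\!B^2$. Hence $\Sigma C$ is not co-sober.

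No step here is a genuine obstacle; the substantive work lives in the earlier results. The only points needing care are bookkeeping ones. First, the compactness in the reduction must be the subspace compactness of $M$ in $G_B$, and this is exactly what the embedding provides. Second, the statement asks about the Scott space of a complete Boolean algebra itself, not of a square of one, which is why we pass to $C=B\times B$ rather than stopping at $B$.
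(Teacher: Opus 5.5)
Your proposal is correct and is essentially the paper's proof: van Douwen's space is sequential, compact and \emph{US}, Theorem~\ref{thm:realization} embeds it as a non-singleton compact irreducible subspace $M\subseteq G_B$, and Theorem~\ref{thm:reduction} applied to $B$ shows that $\Sigma C$ with $C=B\times B$ is not co-sober. Your bookkeeping is, if anything, stated more precisely than in the paper: you check that irreducibility transfers along the embedding, and you note that the reduction's hypothesis concerns $B$ while its conclusion concerns $C=B^2$.
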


\begin{proof}
Let $X$ be the space in Theorem~\ref{thm:van-douwen}.  It is sequential, so
Theorem~\ref{thm:realization} provides a complete Boolean algebra $B$ and a
topological embedding $j:X\to G_B$.  Put $M=j(X)$.  Then $M$ is
non-singleton, compact and irreducible.  The complete Boolean algebra
$C=B^2$ satisfies the hypothesis of Theorem~\ref{thm:reduction}, and hence
$\Sigma C$ is not co-sober.
\end{proof}

\begin{corollary}\label{cor:stronger}
There exists a complete Boolean algebra whose Scott space is well-filtered
but neither sober nor co-sober.
\end{corollary}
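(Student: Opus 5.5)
We take the same complete Boolean algebra $C=B^2$ as in the proof of Theorem~\ref{thm:main}, where $B=\B_X$ comes from Theorem~\ref{thm:realization} applied to van Douwen's space $X$ of Theorem~\ref{thm:van-douwen}. Three properties have to be checked for $\Sigma C$: it is not co-sober, it is not sober, and it is well-filtered.

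\textbf{Failure of co-sobriety.} This is exactly Theorem~\ref{thm:main}. Its proof already shows that $M=j(X)\subseteq G_B$ is a non-singleton compact irreducible subspace. Theorem~\ref{thm:reduction} then applies.

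\textbf{Failure of sobriety.} Under the same hypotheses, Proposition~\ref{prop:nonsober-reduction} applies verbatim. The set ${\downarrow}M$ is an irreducible Scott-closed subset of $C$ whose set of maximal elements is the non-singleton set $M$. So it is not the closure of a point, and $\Sigma C$ is not sober.

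\textbf{Well-filteredness.} $C=B^2$ is a complete lattice, and we invoke the known theorem that the Scott space of every complete lattice is well-filtered (Xi--Lawson). If a self-contained argument is wanted, we would follow the standard one.
\begin{enumerate}
\item Let $\{K_i\}_{i\in I}$ be a filtered family in $\cQ(\Sigma C)$ and $U$ a Scott open set with $\bigcap_i K_i\subseteq U$. Suppose, for contradiction, that $K_i\not\subseteq U$ for every $i$.
\item By Lemma~\ref{lem:compact-dual-closed}, each $K_i$ is closed in the dual Scott topology.
\item Consider the nonempty sets $K_i\setminus U$ and apply a Rudin-type lemma to obtain a minimal closed set $A$ meeting every $K_i\setminus U$. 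For each $i$, the set ${\downarrow}(A\cap K_i)$ is then directed in the relevant sense.
\item Since $C$ is complete, $a=\bigvee A$ exists. We aim to show that $a\in\bigcap_i K_i$ while $a\notin U$, which contradicts $\bigcap_i K_i\subseteq U$.
\end{enumerate}

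The main obstacle is this final step: checking that $\bigvee A$ lies in every $K_i$ uses both the directedness extracted from the minimality of $A$ and the dual-Scott closedness of the $K_i$. In the write-up we would simply cite the literature for this step rather than reprove it. Together with the two failures above, this proves the corollary.
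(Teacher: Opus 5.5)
Your proof is correct and is the paper's argument: non-co-sobriety from Theorem~\ref{thm:main}, non-sobriety from Proposition~\ref{prop:nonsober-reduction} applied to the same $M=j(X)$, and well-filteredness by citing Xi--Lawson for Scott spaces of complete lattices. In your optional sketch the difficulty is misplaced, since $\bigvee A\in K_i$ is immediate ($K_i$ is an upper set meeting $A$); the real work is $\bigvee A\notin U$, i.e.\ showing that the minimal closed set $A$ is directed (so $\bigvee A\in A$), which comes from minimality, compactness of the $K_i$ and Scott continuity of $z\mapsto z\vee y$ rather than from Lemma~\ref{lem:compact-dual-closed}.
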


\begin{proof}
For the algebra $C$ in Theorem~\ref{thm:main}, non-sobriety follows from
Proposition~\ref{prop:nonsober-reduction}.  Every complete lattice has a
well-filtered Scott space by \cite{Xi-Lawson-2017}.
\end{proof}

The theorem also answers the unrestricted existence parts of the analogous
questions for complete lattices and complete Heyting algebras.  It does not,
however, produce a countable example in those larger classes.  For Boolean
algebras, countability is impossible for a different reason.

\begin{proposition}\label{prop:countable-finite}
Every countable complete Boolean algebra is finite.  In particular, the
Scott space of every countable complete Boolean algebra is co-sober.
\end{proposition}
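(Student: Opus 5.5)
The plan is to prove that every infinite complete Boolean algebra is uncountable, and then to handle the finite case directly.

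First I would show that every infinite Boolean algebra $B$ contains an infinite antichain of pairwise disjoint nonzero elements. If $B$ has infinitely many atoms, the atoms already form such a family. If $B$ has only finitely many atoms, let $a$ be their join. Then $\neg a$ is nonzero, since otherwise $B$ would be finite, and $\neg a$ is atomless below itself. An atomless element can be split repeatedly: choose $0<e_1<\neg a$, then split $\neg a\wedge\neg e_1$ again, and so on. This gives an infinite sequence $e_1,e_2,\ldots$ of pairwise disjoint nonzero elements. The one point needing care is the case where $B$ is infinite yet the join of finitely many atoms leaves nothing; that would make $B$ isomorphic to a finite power set, which is a contradiction.

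Next, given pairwise disjoint nonzero elements $(e_n)_{n\in\N}$, I would use completeness to define the map $\varphi(S)=\bigvee_{n\in S}e_n$ for $S\subseteq\N$. I would show that $\varphi$ is injective. If $k\in S\setminus T$, distributivity of finite meets over arbitrary joins (the identity recalled in Section~\ref{sec:prelim}) gives
\[
e_k\wedge\varphi(S)=e_k\neq 0=e_k\wedge\varphi(T).
\]
Hence $|B|\geq 2^{\aleph_0}$, so a countable complete Boolean algebra must be finite.

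For the second assertion, a finite poset satisfies the ascending chain condition. Its Scott topology is therefore the Alexandrov topology of upper sets, and every upper set is compact. Let $K$ be a nonempty saturated set with minimal elements $m_1,\ldots,m_k$; such minimal elements exist because the poset is finite. Then
\[
K={\uparrow}m_1\cup\cdots\cup{\uparrow}m_k,
\]
and each ${\uparrow}m_i$ lies in $\cQ$. If $K$ is $k$-irreducible, induction on this finite union gives $K={\uparrow}m_i$ for some $i$, so $K$ is principal. Thus the space is co-sober.

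I expect the only mild obstacle to be producing the infinite disjoint family. Everything after that is routine.
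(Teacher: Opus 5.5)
Your proposal is correct and follows essentially the same route as the paper: you build an infinite disjoint family of nonzero elements (from atoms, or by repeatedly splitting the atomless part $\neg a$), you inject $\mathcal{P}(\N)$ into $B$ via $S\mapsto\bigvee_{n\in S}e_n$, and in the finite case you write $K$ as the finite union of the ${\uparrow}m_i$ and apply $k$-irreducibility repeatedly. Splitting successive remainders is only a reindexing of the paper's differences $c_n\wedge\neg c_{n+1}$ of a strictly decreasing chain, and your injectivity check via $e_k\wedge\varphi(S)$ fills in a step the paper leaves implicit.
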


\begin{proof}
Let $B$ be an infinite complete Boolean algebra.  We first find pairwise
disjoint nonzero elements $a_0,a_1,\ldots$.  If $B$ has infinitely many
atoms, choose countably many of them. Otherwise let $a$ be the join of all
atoms. If $a=1$, every element of $B$ is the join of the atoms below it; with
only finitely many atoms this makes $B$ finite, contrary to the assumption.
Hence $c_0=\neg a$ is nonzero, and the interval below $c_0$ is atomless.  Recursively choose
$0<c_{n+1}<c_n$ and put

$$  a_n=c_n\wedge\neg c_{n+1}.$$

\noindent Then each $a_n$ is nonzero and the family is pairwise disjoint.

For $S\subseteq\N$, set $b_S=\bigvee_{n\in S}a_n$.  Distinct subsets give
distinct elements, so $\mathcal{P}(\N)$ embeds into $B$.  Hence
$|B|\geq2^{\aleph_0}$, and a countable complete Boolean algebra must be
finite.

Now let $B$ be finite and $K$ a $k$-irreducible compact saturated subset of
$\Sigma B$.  If
$\Min K=\{m_1,\ldots,m_r\}$, then

$$  K={\uparrow} m_1\cup\cdots\cup{\uparrow} m_r.$$

\noindent Repeated use of $k$-irreducibility gives $K={\uparrow} m_i$ for some $i$.
Thus $\Sigma B$ is co-sober.
\end{proof}

\begin{remark}\label{rem:cardinality}
For the countable space $X$ in Theorem~\ref{thm:van-douwen}, the set of all
convergent sequences has cardinal at most
$\mathfrak c=2^{\aleph_0}$.  Hence $|P_X|\leq\mathfrak c$ and
$|\RO(P_X)|\leq2^{\mathfrak c}$.  Proposition~\ref{prop:countable-finite}
shows that every infinite complete Boolean algebra has cardinal at least
$\mathfrak c$.  The present construction therefore gives

$$  \mathfrak c\leq|C|\leq2^{\mathfrak c}.$$

It remains natural to ask whether a counterexample can be chosen of cardinal
exactly $\mathfrak c$, or subject to a chain condition or a separability
requirement.
\end{remark}

\section{Conclusion}

We have constructed a complete Boolean algebra whose Scott space is not
co-sober. The construction first realizes a compact sequential \emph{US}-space in a
Boolean complement graph and then saturates an embedded copy of van
Douwen's compact anti-Hausdorff space. The \emph{KC} property of the complement
graph transfers irreducibility to the saturation. The same embedded space
also yields a non-point irreducible Scott-closed set, so the resulting Scott
space is non-sober while remaining well-filtered.

The realization theorem is independent of the counterexample: every compact
sequential \emph{US}-space admits a complement-graph realization. Natural questions
remain about the size and structure of the Boolean algebra. In particular,
one may ask whether a counterexample can be chosen of cardinality
$\mathfrak c$, or with a chain condition or a separability property.

%\vspace{0.5cm}

%\noindent{\bf References}


\begin{thebibliography}{99}
\bibitem{Abramsky-Jung-1994} S. Abramsky, A. Jung, Domain theory, in: Semantic Structures, Handbook of Logic in Computer Science, vol.3, Clarendon Press, 1994, pp.1-168.

\bibitem{Douwen-1993} E. Douwen, An anti-Hausdorff Fr\'echet space in which convergent sequences have
unique limits, Topol. Appl. 51 (1993) 147-158.

\bibitem{Drake-Thron-1965} D. Drake, W. Thron, On the representation of an abstract
lattice as the family of closed sets of a topological space, Trans. Amer. Math. Soc. 120(1965) 57-71.

\bibitem{Engelking-1989} R. Engelking, General Topology, Polish Scientific Publishers, Warzawa, 1989.

\bibitem{Escardo-Lawson-Simpson-2004} M. Escard\'{o}, J. Lawson, A. Simpson, Comparing Cartesian closed categories of (core) compactly generated spaces, Topol.Appl. 143(2004) 105-145.

\bibitem{GHKLMS-2003}G. Gierz, K. Hofmann, K. Keimel, J.D. Lawson, M. Mislove, D. Scott, Continuous Lattices and Domains, Encyclopedia of Mathematics and its Applications 93. Cambridge University Press, 2003.

\bibitem{Goubault-2013} J. Goubault-Larrecq, Non-Hausdorff Topology and Domain Theory, New Mathematical Monographs, vol. 22, Cambridge University Press, 2013.

%\bibitem{Heckmann-1992} R. Heckmann, An upper power domain construction in terms of strongly compact sets, in: Lecture Notes in Computer Science, vol. 598, Springer-Verlag, New York, 1992, pp. 272-293.

%\bibitem{Heckmann-Keimel-2013} R. Heckmann, K. Keimel, Quasicontinuous domains and the Smyth powerdomain, Electron. Notes Theor. Comput. 298 (2013) 215-232.


%\bibitem{Hochster-1969} M. Hochster, Prime ideal structure in commutative rings, Trans. Amer. Math. Soc. 142 (1969) 43-60.

\bibitem{Hoffmann-1979-1} R. Hoffmann, On the sobrification remainder $^S\!X-X$, Pac. J. Math. 83 (1979) 145-156.

%\bibitem{Isbell-1982} J. Isbell, Completion of a construction of Johnstone, Proceedings of the American Mathematical Society, 85 (1982) 333-334.

\bibitem{Johnstone-1981} P. Johnstone, Scott is not always sober, in: Continuous Lattices, Lecture Notes in Math., vol. 871, Springer-Verlag, 1981, pp. 282-283.

\bibitem{Kunzi-Zypen-2004} H. K\"unzi, D. Zypen, Maximal (sequentially) compact topologies, Appl. Categ. Struct. 12 (2004) 173-187.

%\bibitem{Lawson-Xu-2024-2} J. Lawson, X. Xu, $T_0$-spaces and the lower topology, Math. Struct. Comput. Sci. 34 (2024) 467-490.

%\bibitem{Schalk-1993} A. Schalk, Algebras for Generalized Power Constructions. PhD Thesis, Technische Hochschule Darmstadt, 1993.

\bibitem{Shen-Wu-Xi-Zhao-2020} C. Shen, G. Wu, X. Xi, D. Zhao, Sober Scott space is not always co-sober,  Topol. Appl. 282 (2020) 107316.


\bibitem{Sikorski-1969} R. Sikorski, Boolean Algebras, 3rd Edition, Vol. 25 of Ergebnisse der Mathematik und ihrer Grenzgebiete, Springer-Verlag, Berlin, 1969.


%\bibitem{Thron-1962} W. Thron, Lattice-equivalence of topological spaces, Duke Math. J. 29 (1962) 671-679.

\bibitem{Wen-Xu-2018} X. Wen, X. Xu, Sober is not always co-sober, Topol. Appl. 250 (2018) 48-52.

\bibitem{Wilansky-1967} A. Wilansky, Between $T_1$ and $T_2$, Amer. Math. Monthly 74 (1967) 261-266.

\bibitem{Xi-Shen-Zhao-2026} X. Xi, C. Shen, D. Zhao, The complete Boolean algebra of regular open sets in real line is not sober, 2026, arXiv:2608.00408.

\bibitem{Xi-Lawson-2017} X. Xi and J. Lawson, On well-filtered spaces and ordered sets, Topol. Appl. 228 (2017) 139-144.


%\bibitem{Xu-2016} X. Xu, Order and Topology, Beijing: Science Press, 2016.


\bibitem{Xu-2026} X. Xu, Some open problems on co-sober spaces, Invited talk at ISDT 2026, Chengdu,4--8 August 2026.

%\bibitem{Xu-Shen-Xi-Zhao-2020-2} X. Xu, C. Shen, X. Xi, D. Zhao, On $T_0$ spaces determined by well-filtered spaces, Topol. Appl. 282 (2020) 107323.




\bibitem{Xu-Xu-Ji-Qiu-2026} X. Xu, F. Xu, H. Ji, D. Qiu, Smyth power space of a co-sober space is not always co-sober, 2026, arXiv:2608.02047.



\bibitem{Xu-Zhao-2021} X. Xu, D. Zhao, Some open problems on well-filtered spaces and sober spaces,
    Topol. Appl. 301 (2021) 107540.

\bibitem{Zhao-He-Wang-2023}B. Zhao, Z. He, K. Wang, On co-sober spaces, Houston J. Math. 49(4) (2023) 971-988.

%\bibitem{Zhao-Ho-2015}D. S. Zhao, W. K. Ho, On topologies defined by irreducible sets, Journal of Logical and Algebraic Methods in Programming, 84 (2015) 185-195.

\end{thebibliography}
\end{document}